\newcommand{\C} {\mathbb{C}}
\newcommand{\Z}{\mathbb{Z}}
\newcommand{\PP}{\mathbb{P}}
\newcommand{\NS}{\mathop{\rm NS}}
\newcommand{\mZ}{\mathcal Z}
\newcommand{\KK}{k}
\newcommand{\mF}{\mathcal F}
\newcommand{\mR}{\mathcal R}
\newtheorem{Theorem}{Theorem}[section]
\newtheorem{Proposition}[Theorem]{Proposition}
\newtheorem{Lemma}[Theorem]{Lemma}
\newtheorem{Corollary}[Theorem]{Corollary}
\newtheorem{Assumption}[Theorem]{Assumption}
\theoremstyle{remark}
\newtheorem{Remark}[Theorem]{Remark}
\newtheorem{Example}[Theorem]{Example}
\theoremstyle{definition}
\newtheorem{Definition}[Theorem]{Definition}
\begin{document}

\title{64 lines on smooth quartic surfaces}


\author{S\l awomir Rams}
\address{Institute of Mathematics, Jagiellonian University, 
ul. {\L}ojasiewicza 6,  30-348 Krak\'ow, Poland}
\address{Current address: 
Institut f\"ur Algebraische Geometrie, Leibniz Universit\"at
  Hannover, Welfengarten 1, 30167 Hannover, Germany} 
\email{slawomir.rams@uj.edu.pl}

\author{Matthias Sch\"utt}
\address{Institut f\"ur Algebraische Geometrie, Leibniz Universit\"at
  Hannover, Welfengarten 1, 30167 Hannover, Germany}
\email{schuett@math.uni-hannover.de}

\thanks{Funding   by ERC StG~279723 (SURFARI) and  NCN grant N N201 608040 (S.~Rams)
 is gratefully acknowledged.}

\date{May 19, 2015}

\begin{abstract}
Let $\KK$ be a field of characteristic $p\geq 0$ with $p\neq 2,3$.
 We prove 
 that there are no geometrically smooth quartic surfaces $S \subset \PP^3_k$ 
 with more than 64 lines. 
 As a key step, we derive the sharp bound
that any line meets at most 20 other lines on $S$. 
\end{abstract}
%
%
 \maketitle

 \section{Introduction}
 \label{s:intro}
 
 The aim of this paper is to 
study the configurations of lines on certain quartic surfaces.
In particular, we prove:

\begin{Proposition}
\label{prop1}
\begin{enumerate}
\item
A line $\ell$ on a geometrically smooth quartic surface $S$ in $\PP^3_k$
  intersects at most $20$ other lines provided that char$(k)\neq 2,3$.
  \item
  \label{b}
If $\ell$ meets more than 18 lines on $S$,
then $S$ can be given by a quartic polynomial
$$
x_3x_1^3+x_4x_2^3+x_1x_2 q(x_3,x_4)+x_3g(x_3,x_4)
$$
where $q, g\in \bar k[x_3,x_4]$ are homogeneous 
of degree 2 resp.~3.
\item
\label{c}
The line $\ell=\{x_3=x_4=0\}$ meets 20 lines on $S$ if and only if $x_4\mid g$.
\end{enumerate}
\end{Proposition}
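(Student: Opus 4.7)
The plan is to work in the explicit coordinates from part (b) and enumerate the lines through each point of $\ell$, then assemble the counts.

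First, by direct substitution I observe that the line $\{x_2 = x_3 = 0\}$ always lies on $S$, while the restriction $F|_{x_1 = x_4 = 0} = x_3 g(x_3,0)$ vanishes identically iff $x_4 \mid g$. Hence the line $\{x_1 = x_4 = 0\}$ lies on $S$ precisely when $x_4 \mid g$, and the two corner points $[1:0:0:0]$ and $[0:1:0:0]$ of $\ell$ contribute one line (through $[1:0:0:0]$) always, plus one further line (through $[0:1:0:0]$) precisely when $x_4 \mid g$; degenerate situations will be treated at the end.

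For a general point $p = [a:b:0:0]$ with $ab \neq 0$, I parameterize candidate lines through $p$ as $\{p + sq\}$ for $q = (u_1,u_2,u_3,u_4)$ and expand $F(p+sq)$ in $s$. The coefficient of $s$ forces $(u_3,u_4)=(b^3,-a^3)$ up to scaling, so the line lies in the tangent hyperplane $T_pS$. In terms of $v := bu_1 - au_2$, a short calculation reduces the remaining conditions: the $s^2$-coefficient yields $v = -Q/(3ab)$ with $Q := q(b^3,-a^3)$, the $s^3$-coefficient vanishes automatically, and the $s^4$-coefficient becomes $v^3 + b^3 G = 0$ with $G := g(b^3,-a^3)$. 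These are compatible iff $H(a,b) := Q^3 - 27a^3 b^6 G$ vanishes at $[a:b]$, in which case the line through $p$ is unique.

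Since $Q$ and $G$ depend only on $a^3$ and $b^3$, I write $H(a,b) = \tilde H(a^3,b^3)$ with $\tilde H(u,v)$ of degree $6$ on $\PP^1$. Each simple root of $\tilde H$ with $uv\neq 0$ lifts, via $[a:b]\mapsto[a^3:b^3]$, to three distinct points $p\in\ell$ each carrying one line, so the non-corner points contribute $3k$ lines with $k\leq 6$. Combining with the corner contribution, the total is at most $18+2=20$, with equality requiring both $x_4\mid g$ and that $\tilde H$ has six simple roots in $uv\neq 0$.

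The main obstacle is the case analysis for degenerate situations. If $x_4 \nmid g$ but $q_2=0$ (the coefficient of $x_3^2$ in $q$), then re-examining the $s^4$-equation at $p=[0:1:0:0]$ shows that it becomes $u_3(u_1^3+\alpha u_3^3)=0$, admitting three distinct directions and producing three extra lines through $[0:1:0:0]$; but simultaneously $\tilde H(0,1)=q_2^3=0$, so $\tilde H$ loses a root and the non-corner contribution decreases by $3$. A parallel check handles $q_0=0$. Hence the total never exceeds $19$ when $x_4\nmid g$. Conversely, when $x_4\mid g$, the count takes the form $2+3k$; the hypothesis that $\ell$ meets more than $18$ lines then forces $k=6$, giving exactly $20$ lines.
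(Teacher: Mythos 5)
Your computation lives entirely inside the normal form of part (b), and that is where the argument breaks down as a proof of the Proposition. Part (a) is a statement about an arbitrary line on an arbitrary smooth quartic, and part (b) is precisely the assertion that a line meeting more than $18$ lines forces $S$ into that normal form; by ``working in the explicit coordinates from part (b)'' you assume the conclusion of (b) and prove (a) only for a $6$-parameter subfamily. The actual content of the paper's proof is the reduction you skip: a line of the \emph{first} kind meets at most $18$ other lines (Lemma \ref{lem:I}, via the degree-$\leq 18$ resultant of the restricted equation and its Hessian), so a line meeting more than $18$ lines must be of the second kind; for such lines the base-changed fibration $S_2\to B$ carries $3$-torsion sections, and the induced $3$-isogeny together with the Euler-number count $e(S)=24$ pins down the singular fibres and shows the count is $12$, $15$--$16$, or $18$--$20$ according to the ramification of $\ell\to\PP^1$ (Proposition \ref{prop}); only ramification type $2^2$ allows more than $18$ lines, and that type is what produces the normal form of Lemma \ref{lem:Z}, with the divisibility of the quartic form by $x_3$ coming from the $I_2$-degeneration of Lemma \ref{lem:degen}. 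None of this appears in your proposal, so parts (a) and (b) are unproved.

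Even within the family, your count is incomplete in one direction of part (c): you show that the number of lines meeting $\ell$ is $2+3k$ with $k\leq 6$ when $x_4\mid g$, but the ``if'' direction needs $k=6$, i.e.\ that $\tilde H$ always contributes the full $18$ lines --- equivalently, that the six $I_3/IV$ fibres are always present and that multiple roots of $\tilde H$ cannot lower the count on a \emph{smooth} quartic. In the paper this is exactly the output of Lemma \ref{lem:fibres} and the Euler-characteristic bookkeeping; in your elementary setup it would require a separate analysis of the multiple roots of $\tilde H$ using smoothness of $S$, which you do not carry out. That said, the local computation at a general point of $\ell$ (the tangent-plane condition, the cubic $v^3+b^3G=0$, the sextic $\tilde H$) is a sensible way to recover by hand the fibre locations $q^3+27x_3x_4g=0$ that the paper obtains from the Jacobian fibration, and could serve as an alternative route to part (c) once the reduction to the normal form is supplied; as it stands, however, the proposal establishes only a fragment of the statement.
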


The above proposition will enable us to  prove the main theorem of this paper:
 
\begin{Theorem}
\label{thm}
Let $\KK$ be a field of characteristic $p\geq 0$ with $p\neq 2,3$.
Then any geometrically smooth quartic surface over $k$ contains at most 64 lines.
\end{Theorem}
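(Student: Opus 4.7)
The plan is to fix a line $\ell_0 \subset S$ and bound the other lines of $S$ according to whether they meet or are skew to $\ell_0$, using Proposition~\ref{prop1} as the main input. Let $m$ denote the global maximum, over all lines of $S$, of the number of other lines meeting a given one; Proposition~\ref{prop1}(a) gives $m \leq 20$, and the argument splits naturally at the threshold $m = 18$ appearing in Proposition~\ref{prop1}(b).

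In the generic case $m \leq 18$, I would project $S$ from any chosen $\ell_0$. Blowing up $\ell_0$ turns the projection into a genus-one fibration $\tilde\pi\colon \tilde S \to \PP^1$ whose fibres are the strict transforms of the residual cubics to $\ell_0$ in planes through $\ell_0$. Lines of $S$ skew to $\ell_0$ become sections of $\tilde\pi$, while the lines meeting $\ell_0$ appear as line components of reducible fibres (types: conic plus line, three concurrent lines, or triangle of lines). The bound $m \leq 18$ therefore constrains both the number and the types of reducible fibres. Combined with the rank bound $\rho(\tilde S) \leq 22$ (and $\leq 20$ in characteristic zero), the Shioda--Tate formula $\rho = 2 + \sum_v(r(F_v)-1) + \mathrm{rk}\,\MW(\tilde\pi)$ then forces a ceiling on the Mordell--Weil rank, hence on the number of lines that can appear as sections; putting everything together is designed to yield $N \leq 64$.

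In the extremal case $m \in \{19, 20\}$, I would invoke Proposition~\ref{prop1}(b), which presents $S$ in the very rigid form
$$
x_3 x_1^3 + x_4 x_2^3 + x_1 x_2\, q(x_3, x_4) + x_3\, g(x_3, x_4) = 0,
$$
with $\ell_0 = \{x_3 = x_4 = 0\}$. A direct enumeration now becomes feasible: parametrising candidate lines and substituting into the defining equation reduces the line-on-$S$ condition to a polynomial system in few variables (the coefficients of $q$, $g$ and the line parameters), whose solutions---refined by the divisibility $x_4 \mid g$ from Proposition~\ref{prop1}(c) in the boundary subcase $m = 20$---can be bounded by $64$.

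The delicate step will be the case $m \leq 18$. Since the bound $64$ is sharp (realised by the classical Schur quartic), the Mordell--Weil bookkeeping leaves essentially no slack, and the crucial technical point is converting the combinatorial hypothesis $m \leq 18$ into a sufficiently strong lower bound on the trivial lattice of $\tilde\pi$---equivalently, a tight upper bound on $\mathrm{rk}\,\MW(\tilde\pi)$ and on the number of low-height sections that can actually be realised by lines on $S$.
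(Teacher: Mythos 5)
There are two genuine gaps, one in each branch of your case split.

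In the case $m\leq 18$, the Shioda--Tate bookkeeping cannot close the argument, because $\rho\leq 22$ bounds only the \emph{rank} of the Mordell--Weil group of $\tilde\pi$, not the number of its sections. Lines skew to $\ell_0$ are sections of a specific degree (equivalently, of a specific height), and a lattice of rank $\leq 8$ can contain hundreds of vectors of a fixed small norm (think of the $240$ roots of $E_8$); so nothing in your setup caps the count of line-sections at the $\sim 45$ you would need. Turning ``rank $\leq r$'' into ``at most $N$ minimal vectors realised by lines'' is exactly the computer-aided lattice enumeration (Degtyarev--Itenberg) that this paper is structured to avoid. The paper instead closes this case with two elementary devices you do not use: the flecnodal divisor $\mF_S\in\OO_S(20)$, which contains every line and hence gives intersection-theoretic caps (e.g.\ a conic off its support meets at most $40$ lines, and the whole surface carries at most $80$ lines), and the observation that once a hyperplane section splits into four lines, \emph{every} line on $S$ meets one of them, giving $4+4\cdot(18-3)=64$ directly from the per-line bound. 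Your own closing sentence concedes that the conversion from $m\leq 18$ to a section count is the delicate step; as written, that step is missing rather than delicate.

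In the case $m\in\{19,20\}$, the reduction to the normal form of Proposition \ref{prop1}(b) does not make a ``direct enumeration'' feasible: that normal form still has six free parameters (the coefficients of $q$ and $g$), the Schur quartic with exactly $64$ lines lies in this family, and the lines on a member are not cut out by a zero-dimensional polynomial system one can simply count. Bounding the lines on this family is the hardest part of the paper (its Proposition on $64$ lines for the family $\mZ$), and it is proved not by elimination but by geometry: a second elliptic fibration induced by the line $\ell_1$ in the extra $I_2$ fibre, the order-$3$ symplectic automorphism $[x_1,x_2,x_3,x_4]\mapsto[\varrho x_1,\varrho^2 x_2,x_3,x_4]$ forcing sections and reducible fibres to come in triples, Euler-characteristic constraints on the fibre configurations, and a decomposition of the flecnodal divisor. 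Without a concrete mechanism replacing these inputs, the claim that the solutions ``can be bounded by $64$'' is an assertion of the theorem in this case, not a proof of it.
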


Historically this problem goes back to the study of 
lines on a smooth cubic surface in $\PP^3_\C$ 
 by Cayley  \cite{Cayley}, Salmon \cite{Salmon} and Clebsch.
Notably, on smooth cubics, number and configuration of lines are independent of the 
chosen surface.
 For higher degrees,
 the situation is completely different
 as a generic surface does not contain any line at all.
 Starting from first bounds due to Salmon and Clebsch 
(see Lemma \ref{lem:flecnodaldivisor}),
the problem of the maximal number of lines (for fixed degree)
continues to be an important topic of current research
(cf.~\cite{BS}, \cite{Miyaoka}).
In this direction,
a cornerstone is marked
 by Segre's 1943 paper \cite{Segre}
claiming 
that a line on a smooth complex quartic intersects at most $18$ other lines  \cite[p.~94]{Segre}.
%
This erroneous claim was the key point in Segre's attempt to show
 that a smooth complex quartic surface
may contain at most 64 lines (see Sect.~\ref{s:I}).
However, in view of Propostion \ref{prop1},
Segre's arguments can only yield a bound of 
72 lines on a smooth  quartic surface.

The proofs of  Proposition \ref{prop1} and Theorem \ref{thm}
make heavy use of the theory of elliptic fibrations
 which was not at Segre's disposal.
 In particular, we often apply the fact that smooth quartics in $\PP^3$,
 like any K3 surface, may admit several elliptic fibrations.
 The case of quartics with lines is specifically instructive
 since any line  induces an elliptic fibration (see Sect.~\ref{s:set-up}). 
Modern techniques enable us to (partially) control the behaviour of  its singular fibres  and
describe the quartics from Proposition \ref{prop1} 
that crucially violate Segre's claim. 
While Segre's strategy can no longer work for these surfaces,
it turns out that they have a 
rich structure, for instance reflected in a symplectic 
automorphism of order 3.
The detailed study of the geometry of these quartics 
in   Sect.~\ref{s:II} completes the proof of Theorem~\ref{thm}. 
We do not discuss here all the gaps and mistakes in certain arguments of \cite{Segre}, 
because our main goal is to prove Theorem~\ref{thm}.
Instead we defer   
the treatment of these questions, in a more general framework, to \cite{RS}.

 We should point out that Theorem \ref{thm} is sharp: 
 in any characteristic $p\neq 2, 3$,
 Schur's quartic (\cite{schur})
 \begin{eqnarray}
 \label{eq:Schur}
 \{x^4-xy^3=z^4-zw^3\}\subset\PP^3
 \end{eqnarray}
 contains exactly 64 lines.
 This can be verified with a method going back at least to Barth \cite{Barth}
 and detailed in \cite{BS}.
 In contrast, for characteristic $3$,
 Schur's quartic admits a model with good reduction
 which is isomorphic to the Fermat quartic and thus contains 112 lines. 
It is non-trivial to prove that 112 is the maximum number of lines 
on a smooth quartic surface in characteristic $3$
(because the flecnodal divisor can be degenerate, see Lemma 
\ref{lem:flecnodaldivisor} and \cite{RS14}).

In the same paper Segre claims to have obtained 
"a new non-singular quartic surface containing 64 lines" (\cite[p.~88]{Segre}), 
but we were unable to track down such a construction in his extensive oeuvre.
In the last section of the paper we  derive some properties of the configuration of $64$ lines on a smooth quartic (Proposition \ref{prop:=64}).
These properties form a strong evidence for the claim that a smooth quartic with maximum number of lines is unique
(up to isomorphism over an algebraically closed field).

 While this research was conducted, we were informed by A.~Degtyarev and I.~Itenberg 
 that they had found a  heavily computer-aided   proof of 
 Theorem~\ref{thm} over $\C$ using the lattice theory of K3 surfaces
 (a problem posed by Barth in  \cite[p.~33]{Barth}),
 which would also imply the uniqueness of a smooth complex quartic with maximal number of lines up to projective equivalence. 
In contrast, our treatment makes no essential use of computer-aided lattice calculations
and works for all characteristics other than $2,3$. 

The interested reader may also want to compare a recent paper  by Miyaoka 
\cite{Miyaoka}
which gives bounds for the number of lines on certain complex surfaces,
but 
does not apply to quartics in $\PP^3$ (cf.~\cite[p.~921]{Miyaoka}).

 \noindent
{\em Convention:} Unless otherwise indicated, in this note we work over an algebraically closed field $\KK$ of characteristic $p \neq 2,3$.
Obviously this will suffice to prove Theorem \ref{thm}.
 
 \section{Set-up}
 \label{s:set-up}

 Let $S$ be a smooth quartic surface over 
the field $k$
 containing a line $\ell$.
 Then the pencil of planes $H_t$ containing $\ell$
induces a fibration
\begin{equation} \label{eq:fibration}
\pi: S\to\PP^1
\end{equation}
with reduced fibres of genus 1. 
If char$(k)\neq 2,3$ (which we assume throughout this note), 
then the general fibre is automatically a smooth cubic
with 9 points of inflection.
By construction,
any line on $S$ meeting $\ell$ occurs as component of a singular fibre
$F_t$ of \eqref{eq:fibration}, $t\in\PP^1$.
Therefore it is important to understand the possible singular fibres;
these can be classified following Kodaira \cite{K}, N\'eron \cite{Neron} and Tate \cite{Tate}.
In Kodaira's notation there are the following types:
%

\begin{table}[ht!]
\begin{tabular}{c|l}
fibre type & configuration\\
\hline
$I_1$ & nodal cubic\\
$I_2$ & 2 rational curves meeting transversally in 2 points\\
$I_3$ & 3 rational curves meeting transversally in 3 points\\
$II$ & cuspidal cubic\\
$III$ & 2 rational curves meeting tangentially in a point\\
$IV$ & 3 rational curves meeting transversally in a point
\end{tabular}
\vspace{.3cm}
\label{tab:F}
\caption{Singular fibres of elliptic pencils on smooth quartic surfaces}
\end{table}

For the geometry of the quartic,
it turns out to be crucial how the line $\ell$ meets the fibres. 
Following \cite[p.~87]{Segre} we recall the following definition:

\begin{Definition}[Lines of the second kind]
The line  $\ell$ on $S$ is of the second kind 
iff it intersects every smooth fibre of the fibration
\eqref{eq:fibration} in a point of inflection. 
Otherwise,   $\ell$ is called a line of the first kind.
\end{Definition}

If we assume the line $\ell$ to be of the second kind, then $\ell$ meets
the singular fibres  in points lying in the closure of the flex locus of the smooth fibres
of \eqref{eq:fibration}.
Since the group structure carries over from the smooth fibres
to the smooth locus of the singular fibres by means of N\'eron's model \cite{Neron},
one can compute the support of the closure of the flex locus on the singular fibres
as given in Table \ref{tab:F3}.

\begin{table}[ht!]
\begin{tabular}{c|l}
fibre type & configuration\\
\hline
$I_1$ & 3 smooth points, the node\\
$I_2$ & 3 smooth points of the line component, both nodes\\
$I_3$ & 3 smooth points on each component\\
$II$ & 1 smooth point, the cusp\\
$III$ & 1 smooth point of the line component, the tacnode\\
$IV$ & 1 smooth point on each component,  the triple point
\end{tabular}
\vspace{.3cm}
\label{tab:F3}
\caption{Support of the closure of the flex locus on the singular fibres}
\end{table}

In the next two sections, we will study elliptic fibrations induced by lines of the second kind in detail.
We will show that these occur in three distinct families (Proposition \ref{prop}).
For two of the three families, as well as for quartics without any lines of the second kind,
there is a relatively simple proof of Theorem~\ref{thm},
which we work out in Section \ref{s:I}.
The third family, which shall be denoted by $\mZ$, however, requires serious additional work
which we carry out in Section \ref{s:II}.
Notably this family contains quartics that crucially contradict Segre's claims
from \cite{Segre}
(see also \cite{RS}).

\section{Base change and ramification}
\label{s:3}

Any line $\ell$ on a smooth quartic $S$ comes equipped not only with a genus 1 fibration $\pi: S\to \PP^1$,
but also with a natural morphism
\[
\ell\to\PP^1
\]
of degree 3, sending $x\in\ell$ to the unique point $t\in\PP^1$ such that $x\in F_t$.
On the level of function fields, this corresponds to a degree $3$ extension $k(\ell)/k(t)$.
Its Galois closure corresponds to the function field of another curve $B$;
here $k(B)$ has degree $1$ or $2$ over $k(\ell)$.
Overall, the extension $k(B)/k(t)$ has degree $n=3$ or $6$.
On the level of genus one fibrations, this corresponds to a sequence of base changes
$$
\begin{array}{ccccc}
S_2 & \to & S_1 & \to & S\\
\downarrow && \downarrow &&\downarrow\\
B &\to & \ell & \to & \PP^1
\end{array}
$$
By construction, the trisection $\ell$ of $\pi$ splits off a section on $S_1$
(so that $S_1\to \ell$ is a proper elliptic fibration,
i.e.~the generic fibre has a rational point);
on $S_2$, $\ell$ splits completely into sections.
Choose one of them as zero section $O$
and denote the other two sections by $P_1, P_2$.

We now specialise to the case where $\ell$ is a {\bf line of the second kind}
(for lines of the first kind, similar arguments work, see Section \ref{s:64}).
Then by construction $P_1, P_2$ are 3-torsion sections  which are inverse to each other.
In order to work out the possible singular fibres of $\pi$,
we argue with the fibration 
\[
S_2\to B \, , 
\]
since there the sections severely limit the number of cases.
Notably the 3-torsion sections meet any fibre (smooth or non-smooth)
in three distinct smooth points.
By N\'eron's classification \cite{Neron}, this leaves only the following fibre types:
\begin{eqnarray}
\label{eq:F3}
I_n (n\geq 0), IV, IV^*.
\end{eqnarray}
We shall now distinguish the possible fibre types 
according to the ramification of the morphism $\ell\to\PP^1$.
Note that this morphism is automatically unramified in the smooth fibres
and separable outside characteristic $3$ by \cite[Ch.~7, Theorem 4.38]{Liu}.

\begin{Lemma}[Unramified fibres]
\label{lem:unram}
Let $F$ be a singular fibre of $\pi$
such that the map $\ell\to\PP^1$ is unramified at $F$.
Then $F$ has type $I_1, I_3$ or $IV$.
\end{Lemma}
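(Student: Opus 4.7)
The plan is to combine the classification \eqref{eq:F3} of permissible fibre types on the base-changed surface $S_2\to B$ with an intersection analysis of $\ell\cap F$ inside the plane $H_t\cong\PP^2$. Three formal steps are involved, but the crux---and the step where I expect the main obstacle---is the exclusion of the fibre type $I_2$, which is allowed both by \eqref{eq:F3} and by Table \ref{tab:F}.

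First I would propagate the unramifiedness hypothesis to the Galois closure: if $\ell\to\PP^1$ is unramified at $t:=\pi(F)$, then the three preimages of $t$ in $\ell$ are distinct, so $B\to\PP^1$ is unramified over $t$ as well. Since $S$ is smooth along $F$, the base change $S\times_{\PP^1}B$ remains smooth over the preimages of $t$ in $B$, requires no resolution, and hence coincides with $S_2$ locally. Consequently each fibre of $S_2\to B$ lying over $t$ is isomorphic to $F$, and \eqref{eq:F3} then forces $F$ to have type $I_n$ for some $n\geq 0$, or $IV$, or $IV^*$.

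Intersecting this list with Table \ref{tab:F}---whose entries encode that $F$ is a plane cubic of total degree three in $H_t$---leaves only the four possibilities $I_1,I_2,I_3,IV$, since $I_n$ with $n\geq 4$ and $IV^*$ have too many components while $I_0$ is smooth.

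It remains to eliminate $I_2$. Here $F=L\cup C$ inside $H_t\cong\PP^2$, with $L$ a line and $C$ a smooth conic meeting $L$ transversally at the two nodes. By Table \ref{tab:F3} the closure of the flex locus of $F$---the three smooth flex-closure points on $L$ together with the two nodes---is supported entirely on $L$. Since $\ell$ is of the second kind, all three points of $\ell\cap F$ must lie in this flex locus, hence on $L$. But $\ell$ is not a component of the residual cubic $F$, so $\ell\neq L$; two distinct lines in $\PP^2$ meet in exactly one point, contradicting the existence of three distinct intersection points of $\ell$ with $F$ on $L$. This rules out $I_2$ and completes the proof.
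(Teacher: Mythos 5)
Your argument is correct and follows the paper's proof in all essentials: unramifiedness preserves the fibre type under the base change to $S_2$, the presence of non-trivial $3$-torsion sections cuts the list down to $I_1,I_2,I_3,IV$, and $I_2$ must then be excluded. Your exclusion of $I_2$ (all flex-closure points of an $I_2$ fibre lie on the line component, so $\ell$ would have to meet that line in three distinct points of $H_t\cong\PP^2$) is a mild repackaging of the paper's version, which derives the same contradiction on $S_2$ from the fact that the component group $\Z/2\Z$ forces all $3$-torsion sections onto a single component while $\ell$ visibly meets both components inside $H_t$.
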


\begin{proof}
Since the fibre is unramified, 
$\ell$ meets $F$ in 3 distinct points, all necessarily smooth.
On $S_2$, the fibre $F$ is replaced by $n$ fibres of the same type.
Since these fibres accommodate non-trivial 3-torsion sections by construction,
they fall into the types occurring in Table \ref{tab:F3}.
A priori this leaves the fibre types $I_1, I_2, I_3, IV$.
However,
$I_2$ is impossible since the 3-torsion sections would necessarily all meet the same fibre component
while the line $\ell$ inducing them meets both fibre components as all three curves
(line $\ell$, residual line and conic) lie in the same $H_t\cong\PP^2$.
\end{proof}

Next we analyze the ramified fibres
(which are automatically singular).
At such a fibre, the morphism $\ell\to\PP^1$ has either 2 pre-images (ramification type $1$),
or it is cyclic of degree $3$ (ramification type $2$).
A similar analysis has been carried out in \cite[\S 4.2, case b)]{HT},
but it seems that some cases have gone missing there.

\begin{Lemma}[Ramified fibres]
\label{lem:ram}
Let $F$ be a  fibre of $\pi$
such that the map $\ell\to\PP^1$ is ramified at $F$.
Then $F$ has type $I_1, I_2, II$ or $IV$ according to the  ramification type as follows:

\begin{table}[ht!]
\begin{tabular}{c|cc}
fibre type & $II$ & $I_1, I_2, IV$\\
ramification type & $1$ & $2$
\end{tabular}
\end{table}
\end{Lemma}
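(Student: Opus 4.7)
The plan is to combine the fibre-type restriction \eqref{eq:F3} on $S_2\to B$ with the plane geometry of $F\subset H_{t_0}$ and the $3$-torsion property of $P_1,P_2$. For each of the two ramification types I will first shrink the list of possible $F$ via a base-change computation, then use plane-geometric arguments together with Table \ref{tab:F3}, and finally appeal to the $3$-torsion structure to settle the last delicate case.

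I begin by translating the ramification of $\ell\to\PP^1$ into that of $B\to\PP^1$. If $k(\ell)/k(t)$ is Galois, then it is cyclic of order $3$, so every inertia subgroup is trivial or of order $3$ and ramification type $1$ cannot occur. Otherwise $\mathrm{Gal}(k(B)/k(t))=S_3$: for ramification type $1$ the inertia is a transposition, yielding three points of $B$ above $t_0$ each of ramification index $2$; for ramification type $2$ the inertia is the subgroup of order $3$, yielding ramification index $3$ at each of one or two points. In either case the fibres of $S_2\to B$ above these points arise from $F$ by a cyclic base change of degree $2$ resp.~$3$. In characteristic $\neq 2,3$ the Kodaira--N\'eron tables give
\[
I_n,\,II,\,III,\,IV \;\mapsto\; I_{2n},\,IV,\,I_0^*,\,IV^* \qquad(\text{degree }2),
\]
\[
I_n,\,II,\,III,\,IV \;\mapsto\; I_{3n},\,I_0^*,\,III^*,\,I_0 \qquad(\text{degree }3).
\]
Imposing that the result lies in $\{I_n,IV,IV^*\}$ from \eqref{eq:F3} excludes $F=III$ in both types and $F=II$ for type $2$, leaving $F\in\{I_n,II,IV\}$ for type $1$ and $F\in\{I_n,IV\}$ for type $2$.

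Next I use the plane geometry of $F\subset H_{t_0}$, the intersection number $\ell\cdot F=3$, and Table \ref{tab:F3}. For $F=IV$, ramification type $1$ is impossible because $\ell$ meets the three concurrent lines of $IV$ either at three distinct transverse points (unramified) or at the triple point with multiplicity $3$ (type $2$). The case $F=I_3$ is excluded in both types: type $2$ would require the three non-collinear vertices of the triangle to coincide, while type $1$ would force the multiplicity-$2$ intersection to be a vertex, which by Table \ref{tab:F3} lies outside the closure of the flex locus. For $F=I_2$ with ramification type $1$, a straightforward case analysis shows that the two intersection points of $\ell$ with $I_2$ cannot both lie in the closure of the flex locus described in Table \ref{tab:F3}.

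The remaining case is $F=I_1$ with ramification type $1$: the configuration in which $\ell$ passes through the node and meets the cubic residually at a smooth flex is admissible in the plane. To rule it out I exploit the $3$-torsion of $P_1,P_2$. After the degree-$2$ base change, each of the three fibres of $S_2\to B$ above $t_0$ is an $I_2$ whose identity component is the exceptional $(-2)$-curve resolving the $A_1$-singularity above the node. A local computation at this $A_1$ (for instance in a chart where $S_1$ is $\{s^2-u^2=s'^2\}$ and $\ell=\{u=0\}$) shows that the bisection $B_1$ residual to the diagonal section in $\ell\times_{\PP^1}\ell\subset S_1$ has tangent at the singular point distinct from that of the diagonal, so after resolution $B_1$ meets the resulting $I_2$ once on the exceptional component and once on the strict transform of $F$ (the latter coming from the simple, far-away intersection point). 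When $B_1$ splits as $P_1+P_2$ on $S_2$, one of the sections therefore lies in the non-identity component of some $I_2$; but $P_1,P_2$ are $3$-torsion and the component group $\Z/2$ of $I_2$ has trivial $3$-torsion, so both must meet only the identity component---contradiction. This last step is the main obstacle: it is the only exclusion not available from a base-change or flex-locus check and genuinely requires the global $3$-torsion structure on $S_2$.
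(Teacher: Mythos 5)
Your proof is correct and follows essentially the same route as the paper's: the base-change tables from Tate's algorithm to eliminate $III$ (and $II$ for ramification type $2$), plane geometry combined with the flex-locus table to eliminate $IV$, $I_3$ and $I_2$ in type $1$, and, for the decisive case of $I_1$ with type $1$, the observation that the $\Z/2\Z$ component group of the resulting $I_2$ fibres on $S_2$ cannot accommodate the $3$-torsion sections $P_1,P_2$, which are forced onto different components. The only cosmetic differences are that you spell out the Galois-theoretic bookkeeping and the local $A_1$ computation explicitly, and that you exclude $I_3$ by plane geometry where the paper simply invokes the full $3$-torsion carried by that fibre.
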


\begin{proof}
First of all, $F$ cannot have type $I_3$ since this fibre admits full 3-torsion
so that $\ell\to\PP^1$ cannot possibly ramify.
Similarly, if $\ell\to\PP^1$ ramifies at a fibre of type $IV$, then $\ell$ meets the node of the fibre,
and the ramification type is $2$.
The same argument applies to fibre type $I_2$ by Table \ref{tab:F3},
and in fact, it rules out type $III$ right away.
For the other fibre types, we argue with the base change 
\[
S_2\to B.
\]
In $S_2$, the fibre $F$ is replaced by fibres admitting 3-torsion, i.e.~from Table \ref{tab:F3}.
For instance, a fibre of type $IV$ is replaced by a smooth fibre.
This can be deduced from Tate's algorithm \cite{Tate}
which generally encodes the behaviour of singular fibres 
under cyclic base change of degree $d$.
As an example, we justify the above claim about type $IV$ fibres.
Locally (outside characteristics $2,3$), Tate's algorithm shows that they can be given by a Weierstrass form
\[
y^2 = x^3 + t^2 Ax + t^2 B
\]
with local parameter $t\nmid B$.
Quadratic base change $t=s^2$ directly leads to type $IV^*$
while cyclic base change $t=s^3$ result in a non-minimal Weierstrass form. 
Upon minimalising by $(\xi,\eta)=(s^2x,s^3y)$,
we obtain 
\[
\eta^2 = \xi^3 + s^2 A(s^3)\xi + B(s^3)
\]
with a smooth fibre of j-invariant zero at $s=0$.
All other types can be treated along similar lines.
Below we only list the cases relevant to our problem:
$$
\begin{array}{ccc}
d=1 & d=2 & d=3\\
\hline
I_n & I_{2n} & I_{3n}\\
II & IV & I_0^*\\
III & I_0^* & III^*\\
IV & IV^* & I_0
\end{array}
$$

We note that the above behaviour would have sufficed to rule out the fibre type $III$, 
since in our setting it cannot possibly be base-changed to fibres from  \eqref{eq:F3}.
For $F$ of type $II$, the only possibility consists in replacing it by 3 fibres of type $IV$ in $S_2$.
On $\ell\to\PP^1$, this corresponds to the non-Galois case of ramification type $1$.

It remains to discuss  fibres of type $I_1$.
Assume that the ramification type is $1$.
That is, $\ell$ meets a smooth point of the fibre transversally and a node. 
In $S_2$ the fibre would be replaced  by fibres of type
$I_2$.
By assumption, the section $O$ would meet one fibre component on $S_2$ 
(corresponding to the original smooth intersection point)
while the other sections $P_1, P_2$ meet the other component
(corresponding to the node).
But the structure of the fibre components is $\Z/2\Z$.
Since this does not accommodate proper 3-torsion,
all 3-torsion section have to meet the same component, contradiction.
\end{proof}

As for the existence of the above-listed fibre types, one can exhibit explicit smooth quartics in $\PP^3$
with a line of the second kind and given ramification type without too much difficulty.
For a particular configuration, we will do this in Lemma \ref{lem:familygeil33}.

\section{Consequences for lines of the second kind}
\label{s:cons}
\label{s:4}

In \cite[$\S$~7]{Segre},  Segre claims that a line $\ell$ of the second kind on a complex quartic surface
meets at most 18 other lines.
Here we confirm his claim for two configurations of ramification types of $\ell\to\PP^1$.
For the third configuration, however, we give  a counterexample
and correct Segre's bound.
We also correct Segre's assertion \cite[p.~95]{Segre} that for the second configuration, the number of lines meeting $\ell$
always equals 15.
Throughout, our arguments work over any algebraically closed field $k$ of characteristic $\neq 2,3$.

Note that since $\ell\to\PP^1$ has degree $3$,
the ramification types admit three configurations:
\begin{itemize}
\item
4 points with 2 pre-images and ramification of index $1$ each, or
\item
2 points with 2 pre-images each and 1 point with 1 pre-image (ramification of index $1,1,2$), or
\item 
2 points with 1 pre-image each (ramification of index $2$ each).
\end{itemize}
We will use the following shorthand-notation for the ramification type $R$:
\[
R=1^4;\text{ resp. } R =  2, 1^2;\text{ resp. } R = 2^2.
\]

\begin{Proposition}[Lines meeting $\ell$]
\label{prop}
Let $\ell$ be a line of the second kind on a smooth quartic $S$ with ramification type $R$.
Let $G_R$ be defined as follows:

\begin{table}[ht!]
\begin{tabular}{c|ccc}
$R$ & $1^4$ & $2,1^2$ & $2^2$\\
\hline
$G_R$ & $\{12\}$ & $\{15,16\}$ & $\{18,19,20\}$
\end{tabular}
\end{table}

Then $\ell$ meets exactly $N$ other lines contained in $S$, where $N\in G_R$.
\end{Proposition}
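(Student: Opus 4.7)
The plan is to combine the singular fibre classification from Lemmas \ref{lem:unram} and \ref{lem:ram} with the K3 identity $e(S)=24$ to express $N$ in terms of a single combinatorial quantity, and then to bound that quantity via auxiliary structures (the Mordell-Weil group on the second base change $S_2 \to B$ and the flex / Hessian divisor of the pencil $\{F_t\}$). By those lemmas the singular fibres of $\pi$ are confined to types $I_1, I_2, I_3, II, IV$ (type $III$ is excluded), with $I_2$ only ramified of type $2$, $I_3$ only unramified, $II$ only ramified of type $1$, while $I_1$ and $IV$ may occur in either flavour. Let $a_u, c, f_u$ count the unramified fibres of type $I_1, I_3, IV$, let $a_r, b, f_r$ count the ramified fibres of type $I_1, I_2, IV$, and let $d$ count the $II$ fibres. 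The three configurations of $R$ enumerated just before the statement force $(d, a_r + b + f_r) = (4,0), (2,1), (0,2)$ for $R = 1^4,\, 2,1^2,\, 2^2$ respectively.

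A singular fibre of type $I_2$, $I_3$, or $IV$ contributes $1$, $3$, or $3$ line components, while $I_1$ and $II$ contribute none, so $N = b + 3c + 3(f_u + f_r)$. Subtracting this from the Euler relation $a_u + a_r + 2b + 3c + 2d + 4(f_u + f_r) = 24$ gives
\[
N \;=\; (24 - 2d) \;-\; (a_u + f_u) \;-\; (a_r + b + f_r),
\]
which specialises to $N = 16 - (a_u + f_u)$, $19 - (a_u + f_u)$, $22 - (a_u + f_u)$ for $R = 1^4$, $2,1^2$, $2^2$ respectively. The assertion $N \in G_R$ is therefore equivalent to proving $a_u + f_u \in \{4\}$, $\{3,4\}$, $\{2,3,4\}$ in the three cases. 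The residue of $a_u+f_u$ modulo $3$ is already pinned down by the Euler identity in each sub-configuration of ramified types; what remains is to establish a sharp upper bound $a_u + f_u \leq 4$ (uniformly) and an appropriate lower bound for each $R$.

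These bounds on $a_u + f_u$ are the main obstacle. The idea is to work on the second base change $S_2 \to B$: there $\ell$ splits into three sections forming a $\Z/3\Z$-subgroup of $\MW(S_2/B)$, the fibre types on $S_2$ are restricted to $I_n$, $IV$, $IV^*$ by the presence of 3-torsion, and the base-change table from Tate's algorithm (given in Section~\ref{s:3}) converts each fibre of $S$ into its $S_2$-counterpart. Applying the Shioda-Tate formula on $S_2$ together with $\rho(S_2) \leq h^{1,1}(S_2)$ (computable from $e(S_2)$ and $\chi(\OO_{S_2})$) restricts the number of reducible fibres on $S_2$, which via the base-change dictionary translates back into a bound on $a_u + f_u$ on $S$. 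Where the lattice bound is not tight enough on its own, it is complemented by the observation that $\ell$ being of the second kind forces it to be a component of the flex (Hessian) divisor of the pencil $\{F_t\}$; the residual intersection of this divisor with $\ell$ and with the singular fibres then produces the missing inequalities. The case analysis branches on the types of the ramified fibres, but no individual sub-case is deep.

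Finally, to confirm that every $N \in G_R$ is actually realised, it suffices to exhibit explicit quartic equations with a line of the second kind of the prescribed ramification type; one such family is constructed in the subsequent Lemma \ref{lem:familygeil33}, and further examples arise in the detailed study of the family $\mZ$ carried out in Section \ref{s:II}.
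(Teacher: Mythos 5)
Your combinatorial reduction is correct and, after unwinding, equivalent to the paper's: with your notation one checks that the paper's conclusion amounts precisely to $a_u+f_u=6-d/2-(a_r+2b+f_r)$, i.e.\ to the bounds you isolate. But the proposal stops exactly where the real work begins. You state that the bounds on $a_u+f_u$ are ``the main obstacle'' and then only gesture at two tools, neither of which is carried out and neither of which can plausibly close the gap. First, the Shioda--Tate argument: on $S$ itself it only gives $2(c+f_u+f_r)+b\leq 18$, i.e.\ at most $9$ triple-line fibres, far short of the required $\leq 6-d/2$; and the inequality $\rho\leq h^{1,1}$ that you invoke on $S_2$ is a characteristic-zero statement (in positive characteristic $\rho$ can reach $b_2$, and the proposition is asserted for all $p\neq 2,3$). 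Note also that $S_2$ is generally not a K3 surface, so its Hodge numbers are not ``free''. Second, the Hessian/flex divisor: for a line of the \emph{second} kind the Hessian vanishes identically along $\ell$, which is exactly why Segre's Hessian count (Lemma \ref{lem:I}) does not apply here; you cannot extract a residual intersection number from it without substantial extra input.

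The idea you are missing is the paper's key step: the two $3$-torsion sections on $S_2\to B$ induce a $3$-isogeny $S_2\to S_2'$ onto another elliptic surface with the \emph{same} Euler--Poincar\'e characteristic, while the semi-stable fibre types transform asymmetrically under the isogeny (unramified $I_1\leftrightarrow I_3$, ramified $I_1\mapsto I_3\mapsto I_9$, ramified $I_2\mapsto I_6\mapsto I_{18}$, with $IV$ and $II$ contributing equally on both sides). Equating $e(S_2)=e(S_2')$ forces the semi-stable fibres of $\pi$ to come in pairs $(I_1,I_3)$ and triples $(I_2,I_3,I_3)$ (Lemma \ref{lem:fibres}), which is exactly the linear relation $c=a_u+a_r+2b$ needed to pin down $a_u+f_u$; the Euler count on $S$ then finishes each ramification case in one line. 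Without this (or an equivalent substitute valid in all characteristics $\neq 2,3$), the proposal does not prove the proposition. Your closing remark on realisability is fine but inessential, since the statement only asserts $N\in G_R$, not that every value occurs.
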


\begin{Remark}
\label{rem:4.2}
Regarded as data concerning degree $3$ morphisms $\PP^1\to\PP^1$,
ramification type $R=2^2$ is most special, and in fact arises as a degeneration of the other two.
In our context of lines of the second kind, however,
somewhat counterintuitively,
each ramification type comes with its own 6-dimensional family of quartics.
Here we will only exhibit the family for type $R=2^2$ in Lemma \ref{lem:Z}
which is crucial to the proof of Theorem \ref{thm}.
An analysis of the other families,
each interesting in its own right, is directed to \cite{RS}.
\end{Remark}

To prove the Proposition,
we use the $3$-torsion sections on $S_2$.
These induce an isogeny (on the generic fibres)
\[
S_2 \to S_2'.
\]
Here $S_2$ and $S_2'$ have the same invariants (such as Euler-Poincar\'e characteristic, Betti numbers, geometric genus, Picard number).
As has been exploited before (see e.g.~\cite{MP}),
this puts severe restrictions on the singular fibres of $S_2$ and $S_2'$.
Recalling the possible singular fibres of $S$ from Lemmas \ref{lem:unram} and \ref{lem:ram},
we can easily determine the corresponding fibres on $S_2$ and $S_2'$
in terms of the degree $n$ of the Galois extension $k(B)/k(t)$:

\begin{table}[ht!]
\begin{tabular}{c|ccc}
& fibre on $S$ & fibre on $S_2$ & fibre on $S_2'$\\
\hline
& $I_1$ & $n\times I_1 $ & $n\times I_3$\\
unramified & $I_3$ & $n\times I_3 $ & $n\times I_1$\\
& $IV$ & $n\times IV $ & $n\times IV$\\
\hline
& $II$ & $3\times IV$ & $3\times IV$\\
ramified & $I_1$ & $I_3$ & $I_9$\\
& $I_2$ & $I_6$ & $I_{18}$\\
& $IV$ & $I_0$ & $I_0$
\end{tabular}
\end{table}

To see this in the unramified case, 
it suffices to note that $\ell$ meets all three components of
a fibre of type $I_3$, so the three sections on $S_2$ meet different components of each pre-image.
For the ramified semi-stable case,
recall that $\ell$ meets the fibre at a node with multiplicity 3,
i.e. tangentially to one of the branches.
This implies that all three sections on $S_2$ meet the same component
of the resulting singular fibre,
and the fibre type on $S_2'$ is as claimed.

Since the Euler-Poincar\'e characteristics on $S_2$ and $S_2'$ are the same, and since they equal the sum of the local contributions from the singular fibres,
we deduce that the unramified fibres of type $I_3$ on $S_2$ have to balance out with the other semi-stable fibres.
On $S$, this translates as follows: 

\begin{Lemma}
\label{lem:fibres}
Semi-stable fibres on $S$ occur in pairs $(I_1, I_3)$ and triples $(I_2, I_3, I_3)$.
\end{Lemma}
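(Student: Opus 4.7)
The plan is to execute the Euler-characteristic bookkeeping advertised in the paragraph preceding the statement. The 3-isogeny $S_2 \to S_2'$ of elliptic surfaces over $B$ gives $\chi(S_2)=\chi(S_2')$, and each side decomposes as a sum of local contributions from the singular fibres, so comparing term by term using the table above reduces the claim to a single linear relation among the numbers of semi-stable fibres on $S$.

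First, I would discard the fibre types whose contributions agree on both surfaces: each unramified $IV$ contributes $4n$ to either Euler number, each ramified $II$ contributes $3\cdot 4=12$, and each ramified $IV$ contributes $0$. Let $a,b$ denote the numbers of unramified $I_1,I_3$ fibres on $S$, and $a',b'$ the numbers of ramified $I_1,I_2$ fibres. Recalling that a base point of $\PP^1$ lifts to a Galois orbit of length $n$ in $B$ when unramified and of length $n/3$ when totally ramified, the remaining part of $\chi(S_2)-\chi(S_2')$ becomes
\[
a\cdot n(1-3)+b\cdot n(3-1)+a'\cdot \tfrac{n}{3}(3-9)+b'\cdot \tfrac{n}{3}(6-18)=-2n(a-b+a'+2b').
\]
Setting this to zero yields the key identity $b=a+a'+2b'$.

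To finish, I would read off the combinatorial content of this identity: the unramified $I_3$ fibres are exactly in bijection with all $I_1$ fibres on $S$ (unramified and ramified together) together with two copies of each ramified $I_2$, so every semi-stable fibre of $S$ belongs either to a pair $(I_1,I_3)$ or to a triple $(I_2,I_3,I_3)$, as claimed. The only delicate point is getting the multiplicities $n$ versus $n/3$ right at ramified base points of $B\to\PP^1$ (so that, in particular, $b-a$ comes out integral even when $n=6$); apart from this bookkeeping the argument is a direct Euler-number count.
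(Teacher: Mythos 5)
Your proposal is correct and follows essentially the same route as the paper: comparing $e(S_2)=e(S_2')$ via the local Euler contributions read off from the fibre-correspondence table, discarding the types that contribute equally, and extracting the single linear relation $b=a+a'+2b'$ that encodes the pairing of $I_1$'s and $I_2$'s with unramified $I_3$'s. The paper states this balancing more tersely, but your bookkeeping (including the $n$ versus $n/3$ orbit lengths, which in any case cancel from the comparison) is exactly the intended argument.
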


We shall now prove Proposition \ref{prop} by a case-by-case analysis 
depending on the ramification type.

$\boldsymbol{R=1^4}$.
By Lemma \ref{lem:ram}, $S$ has 4 ramified fibres of type $II$.
Since these contribute $8$ to $e(S)=24$,
the remaining local contributions of $16$ disperse over a total number of 4 
pairs of $(I_1, I_3)$ and fibres of type $IV$.
In any case, the fibration $S\to\PP^1$ has exactly 4 fibres consisting of 3 lines,
and no other reducible fibres. Thus $\ell$ meets exactly 12 lines $\ell' \neq \ell$ in $S$. 

$\boldsymbol{R=2,1^2}$.
By Lemma \ref{lem:ram}, $S$ has 2 ramified fibres of type $II$.
Since these contribute $4$ to $e(S)=24$,
the remaining local contributions of $20$ disperse over a total number of 5 
pairs of $(I_1, I_3)$ and fibres of type $IV$, or possibly (counted twice) a triple $(I_2,I_3,I_3)$.
In any case, the fibration $S\to\PP^1$ has exactly 5 fibres consisting of 3 lines,
and possibly one further reducible fibre
(line + conic). Thus $\ell$ meets exactly 15 or 16 lines $\ell' \neq \ell$ in $S$. 

$\boldsymbol{R=2^2}$.
By Lemma \ref{lem:ram}, $S$ has no fibres of type $II$.
In order to reach $e(S)=24$,
the local contributions of the singular fibres are distributed over a total number of 6 
pairs of $(I_1, I_3)$ and fibres of type $IV$, or possibly (counted twice each) one or two triples $(I_2,I_3,I_3)$.
In any case, the fibration $S\to\PP^1$ has exactly 6 fibres consisting of 3 lines,
and possibly one or two further reducible fibres
(line + conic). Thus $\ell$ meets exactly 18, 19 or 20 lines $\ell' \neq \ell$ in $S$.
This completes the proof of Proposition \ref{prop}. \qed

\begin{Remark}
Compared with the analysis in \cite[\S 4.2 case b)]{HT},
we have added a few cases, but the main statement of loc.~cit.~is still valid:
a line of the second kind meets at least 6 (or in fact 12) lines  on a smooth quartic.
\end{Remark}

The existence of each of the above cases can be proved by exhibiting
explicit smooth quartics.
In fact, each configuration can be realised in a different 6-parameter family of quartics.
In particular, this shows that neither occurs as a subfamily of the other 
(cf.~Remark \ref{rem:4.2}).
Segre's original argument in \cite{Segre} is only valid for the first configuration, i.e.~the case $R=1^4$.
We will elaborate on this in \cite{RS}.
Here we give the explicit parametrisation of the case $R=2^2$.
In particular this answers a question from \cite[\S 4.2 case b)]{HT}
(see also Lemma \ref{lem:degen} for degenerations).

\begin{Lemma}[Family $\mZ$]
\label{lem:familygeil33}
\label{lem:Z}
Let $\ell$ be a line of the second kind on a smooth quartic $S$ with ramification type $R=2^2$.
Then $S$ is projectively equivalent to a quartic in the family
$$
\mZ := \{ x_3x_1^3+x_4x_2^3+x_1x_2q(x_3,x_4)+g(x_3,x_4)=0\} \, , 
$$
where $q  \in k[x_3, x_4]$ (resp. $g \in k[x_3, x_4]$) is  a homogeneous polynomial of degree $2$ (resp.~$4$).
\end{Lemma}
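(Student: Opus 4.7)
The plan is to establish the special form of $F$ by combining an analysis of the ramification of the projection $\ell\to\PP^1$ (which pins down the leading $(x_1,x_2)$-part of $F$) with the existence of an order-$3$ symplectic automorphism $\alpha$ of $S$ coming from the torsion of the induced elliptic fibration (whose invariants then dictate all remaining monomials).

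First I would fix projective coordinates so that $\ell=\{x_3=x_4=0\}$ and the pencil of planes through $\ell$ is $H_t=\{x_4=tx_3\}$, $t\in\PP^1$, with $H_\infty=\{x_3=0\}$. Since $R=2^2$ consists of two totally ramified points of $\ell\to\PP^1$, I place these at $t=0$ and $t=\infty$, and their unique pre-images on $\ell$ at $(0:1:0:0)$ and $(1:0:0:0)$ respectively. Decomposing $F$ by bi-degree in $(x_1,x_2)$ versus $(x_3,x_4)$, the condition $\ell\subset S$ kills all monomials pure in $(x_1,x_2)$, and the degree-$3$-in-$(x_1,x_2)$ part takes the shape $A(x_1,x_2)\,x_3+B(x_1,x_2)\,x_4$ for binary cubics $A,B$. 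Now $F|_{x_4=0}=x_3(A+O(x_3))$, so $A$ is the restriction of the residual cubic $C_0\subset H_0$ to $\ell$; the triple ramification at $(0:1:0:0)$ forces $A\propto x_1^3$, and symmetrically $B\propto x_2^3$. Rescaling $x_1$ and $x_2$ normalises $F_{(3)}=x_3x_1^3+x_4x_2^3$.

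Next I would construct an order-$3$ symplectic automorphism $\alpha$ of $S$ realising the cyclic Galois action of $\ell\to\PP^1$ (cyclic because $R=2^2$ has two totally ramified points of the degree-$3$ cover). The key input is that on each smooth fibre $C_t$ the three points of $\ell\cap C_t$ are flexes, by the second-kind hypothesis, so they form a coset of a cyclic subgroup $G_t\subset C_t[3]$ of order $3$; translation by $G_t$ cyclically permutes them and defines an order-$3$ automorphism of $C_t$. As $t$ varies, these translations extend via N\'eron's model through the singular fibres (all of type $I_1, I_2, I_3, IV$ by Lemmas~\ref{lem:unram} and \ref{lem:ram}) and glue to a fibre-preserving automorphism $\alpha\colon S\to S$ fixing $\ell$ setwise, with fixed points on $\ell$ exactly the two ramification pre-images. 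Since $\alpha$ preserves the hyperplane class and acts trivially on the pencil base $(x_3:x_4)$, it extends to a linear automorphism of $\PP^3$ trivial on $(x_3,x_4)$ (absorbing a scalar); the symplectic condition (tangent weights at each isolated fixed point on $S$ multiply to $1$) then forces the action on $(x_1,x_2)$ to have opposite weights, giving $\alpha(x_1,x_2,x_3,x_4)=(\omega x_1,\omega^{-1}x_2,x_3,x_4)$ with $\omega$ a primitive cube root of unity.

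Finally I would read off $F$ from its $\alpha$-invariance. The invariant degree-$4$ monomials are $x_1^ax_2^bx_3^cx_4^d$ with $a\equiv b\pmod 3$; combining this list with the vanishing of $F$ on $\ell$ (which rules out $x_1^2x_2^2$) and the ramification analysis of the first step (applied to $F|_{x_4=0}$ and $F|_{x_3=0}$, which rules out the coefficients of $x_2^3x_3$ and $x_1^3x_4$) leaves exactly the ten monomials $x_3x_1^3$, $x_4x_2^3$, $x_1x_2\cdot x_3^ix_4^{2-i}$ ($i=0,1,2$), and $x_3^jx_4^{4-j}$ ($j=0,\dots,4$). Collecting them into $q(x_3,x_4)\,x_1x_2+g(x_3,x_4)$ with $\deg q=2$ and $\deg g=4$ yields the form defining $\mZ$. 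The main obstacle is the global construction of $\alpha$ in the second paragraph: one must verify that the fibre-wise translations by $G_t$ glue to an algebraic automorphism of the total space $S$, which in particular means handling the extension across singular fibres via their N\'eron-model group structure and, on reducible fibres of type $I_2, I_3, IV$, controlling the induced permutation of components; once $\alpha$ is in place as a symplectic order-$3$ automorphism fixing $\ell$, the reduction to the form of $\mZ$ is an invariant-theoretic bookkeeping.
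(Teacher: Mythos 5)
Your proposal is correct in substance but takes a genuinely different route from the paper's. The paper's argument is purely computational: after the same initial normalisation (ramification at $0$ and $\infty$, leading part $x_3x_1^3+x_4x_2^3$, plus a further substitution killing the terms involving $x_1^2x_3$ and $x_2^2x_4$), it imposes the second-kind condition directly by requiring the Hessian of the residual cubics to vanish identically on $\ell$, and checks that this annihilates every monomial outside $\mZ$. You instead construct the order-$3$ translation automorphism first and extract the equation by invariant theory; this is essentially the automorphism $\sigma$ which the paper only introduces in Section \ref{s:II} and reads off from the explicit equation, so your argument has the merit of explaining conceptually where $\sigma$ comes from. Two points in your second paragraph must be made precise for the argument to close. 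First, ``translation by $G_t$'' is not a single map: one must choose a generator, i.e.\ the degree-zero class $\tau=P_1-O$ on the generic fibre of $S_2\to B$, and verify that $\tau$ is fixed by the Galois group of $k(B)/k(t)$ so that the translation descends to $S$. This does work --- $R=2^2$ forces the cover $\ell\to\PP^1$ to be cyclic, and collinearity $O+P_1+P_2\sim 3O$ gives $\gamma(\tau)=P_2-P_1=-2\tau=\tau$ --- but it is the crux, and it is exactly where the construction would break for non-cyclic monodromy; once $\tau$ is $k(t)$-rational, translation is a birational self-map of the K3 surface $S$ over $\PP^1$, hence biregular by minimality, which is cleaner than gluing fibrewise through N\'eron models. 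Second, you use symplecticity of $\alpha$ as an input to exclude weights such as $(\omega,1)$ --- a necessary exclusion, since $\mathrm{diag}(\omega,1,1,1)$ also fixes both leading monomials and would leave a strictly larger family; that fibrewise translation acts trivially on the regular $2$-form is true and standard (it preserves $dt\wedge dx/y$), but it should be asserted on those grounds rather than via the fixed-point weights you are in the process of determining. With these two verifications supplied, your count of the surviving monomials is correct and recovers $\mZ$; the trade-off is that the paper's Hessian computation is shorter and entirely elementary, while your route is longer but structurally more informative.
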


\begin{proof}
After a linear transformation, we can assume the line $\ell$ to be given by $x_3=x_4=0$,
and the ramification to occur at $0$ and $\infty$, that is at $x_3=0$ and $x_4=0$.
A further normalisation takes the residual cubic powers in these fibres to $x_1^3, x_2^3$,
so that the equation becomes
\begin{eqnarray}
\label{eq:mid}
x_3x_1^3+x_4x_2^3+x_3^2q_1+x_3x_4q_2+x_4^2q_3=0
\end{eqnarray}
where the $q_j$ are homogeneous quadratic forms in $x_1,\hdots,x_4$.
By adding terms in $x_3, x_4$ to $x_1$ and $x_2$, 
we can make sure that the terms involving $x_1^2x_3$ and $x_2^2x_4$ in \eqref{eq:mid} are zero.
Then solving for $\ell$ to be a line of the second kind,
i.e.~for the Hessian of \eqref{eq:mid} to vanish identically on $\ell$,
implies all other terms not appearing in $\mZ$ to vanish, too.
\end{proof}

We end this section by commenting on the elliptic fibration
\[
\pi: S \to \PP^1
\]
induced by the line $\ell$ of the second kind on a smooth K3 surface $S \in \mZ$.
Generically, there are six singular fibres of Kodaira type $I_1$
located at $0, \infty$ and at the zeroes of $g$.
Using standard formulas, for instance for the Jacobian of the fibration,
one finds 6 fibres of Kodaira type $I_3$ at the zeroes of
$q^3+27x_3x_4g$.
Thus we can also describe the degenerations of $S$ as smooth quartic
which we will need for the proof of Proposition \ref{prop1} (b), (c):

\begin{Lemma}[Degenerations]
\label{lem:degen}
A surface $S\in\mZ$ is a smooth quartic
such that the fibration $\pi:S\to\PP^1$ attains a fibre of Kodaira type $I_2$ (necessarily at $0$ or $\infty$) iff $x_3$ or $x_4$ divides $g$.
The ramified fibres degenerate to Kodaira type $IV$ iff $x_3$ or $x_4$ divides $q$.
\end{Lemma}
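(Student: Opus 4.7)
The plan is to read off the Kodaira type of each ramified fibre of $\pi:S\to\PP^1$ directly from the equation of $S\in\mZ$. The two ramified fibres lie at $t=0$ (i.e.\ $x_4=0$) and $t=\infty$ (i.e.\ $x_3=0$), and the family $\mZ$ carries the obvious involution $(x_1,x_2,x_3,x_4)\mapsto(x_2,x_1,x_4,x_3)$, which exchanges the two. Writing $q_{ij}$ and $g_{ij}$ for the coefficients of $x_3^ix_4^j$ in $q$ and $g$, so that $x_3\mid g\Leftrightarrow g_{40}=0$ and $x_4\mid q\Leftrightarrow q_{20}=0$, it therefore suffices to treat the fibre over $t=0$.

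Setting $x_4=0$ in the defining polynomial and factoring off the equation $x_3=0$ of $\ell\subset H_0$ gives the residual cubic
\[
F_0:\quad x_1^3+q_{20}\,x_1x_2x_3+g_{40}\,x_3^3=0
\]
in $H_0\cong\PP^2$. By Lemma~\ref{lem:ram} applied to the ramification type $R=2^2$, the Kodaira type of $F_0$ is one of $I_1$, $I_2$ and $IV$; I would distinguish these by reading off the singular locus of the plane cubic.

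The intersection point $\ell\cap F_0=(0{:}1{:}0{:}0)$ is singular on $F_0$ for every choice of parameters, since in the affine chart $x_2=1$ the cubic has neither a constant nor a linear term, and its quadratic part equals $q_{20}\,x_1x_3$. Three subcases then arise. If $q_{20}\neq 0$ and $g_{40}\neq 0$, this is the unique singular point, and its non-degenerate quadratic part shows it is a node, so $F_0$ is an irreducible nodal cubic of type $I_1$. If $g_{40}=0$ and $q_{20}\neq 0$, then $F_0=x_1\,(x_1^2+q_{20}x_2x_3)$ splits as the transverse union of a line and a smooth conic meeting in the two distinct points $(0{:}1{:}0)$ and $(0{:}0{:}1)$, so $F_0$ is of type $I_2$. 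If $q_{20}=0$ and $g_{40}\neq 0$, then $F_0=x_1^3+g_{40}x_3^3$ factors, using that $\mathrm{char}(k)\neq 3$, as three distinct concurrent lines through $(0{:}1{:}0)$, giving type $IV$. Since the three Kodaira types are pairwise incompatible, each of these implications reverses.

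The subtle point is the excluded case $q_{20}=g_{40}=0$, in which $F_0$ collapses to the non-reduced cubic $\{x_1^3=0\}$. Writing $q=x_4(q_{11}x_3+q_{02}x_4)$ and $g=x_4\tilde g$ with $\tilde g$ homogeneous of degree $3$, a direct computation of the four partials of the defining polynomial of $S$ at the points $(0{:}x_2{:}1{:}0)$ shows that only $\partial_{x_4}$ can be non-zero, where it equals $x_2^3+g_{31}$; hence $S$ is singular at each cube root of $-g_{31}$, contradicting the smoothness hypothesis. I expect this last step to be the main (though still elementary) obstacle, since it is precisely what promotes the two statements of the lemma from one-sided implications to genuine biconditionals. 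Combining the three non-degenerate cases at $t=0$ with the analogous statement at $t=\infty$ obtained via the involution above then yields both halves of the lemma.
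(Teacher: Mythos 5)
Your proof is correct, but it takes a genuinely different route from the paper's. The paper locates the singular fibres via the discriminant: generically six fibres of type $I_1$ at $0$, $\infty$ and the zeroes of $g$, and six fibres of type $I_3$ at the zeroes of $q^3+27x_3x_4g$ (found via the Jacobian fibration); the degenerations are then read off from collisions of these loci, and smoothness is invoked only to exclude an $I_2$ fibre away from $0,\infty$, which would force a double root of $g$ and hence a singular point of $S$ with $x_1=x_2=0$. You instead write down the residual plane cubic in each ramified plane and classify its singularity by hand; this is more elementary and self-contained (it never needs the location of the $I_3$ fibres), and your explicit smoothness check in the doubly degenerate case $q_{20}=g_{40}=0$ makes precise a verification the paper leaves implicit. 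Two small points. First, with your conventions the correct equivalence is $x_4\mid g\Leftrightarrow g_{40}=0$, not $x_3\mid g\Leftrightarrow g_{40}=0$, since $g_{40}$ is the coefficient of $x_3^4$; this is only a labelling slip, because your formula for $F_0$ and the involution $(x_1,x_2,x_3,x_4)\mapsto(x_2,x_1,x_4,x_3)$ are consistent with the corrected statement, so the conclusion is unaffected. Second, the parenthetical assertion that an $I_2$ fibre can only occur at $0$ or $\infty$ rests on Lemma~\ref{lem:unram} (unramified singular fibres have type $I_1$, $I_3$ or $IV$); you use this implicitly when you restrict attention to the two ramified fibres, and it would be worth citing explicitly.
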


\begin{proof}
The claimed degenerations are obvious from the loci of the singular fibres.
It only remains to note that a singular fibre of Kodaira type $I_2$ outside $0,\infty$,
requires $g$ to contain a square factor.
But then this results in a singularity of $S$ at the double root with $x_1=x_2=0$.
\end{proof}

\section{64 lines away from $\mZ$}
\label{s:I}

This section adapts Segre's arguments from \cite{Segre} 
to prove Theorem~\ref{thm} for quartics $S$ outside the family $\mZ$,
i.e.~if there is no line of the second kind on $S$, or all lines of the second kind on the quartic in question fall into the first two cases of 
Proposition~\ref{prop}.
We make crucial use of the flecnodal divisor on $S$  (see \cite{Clebsch}, \cite{Salmon}).
Classically this divisor was studied over $\C$, but by \cite[Thm~1 \& Prop.~1]{Voloch}, 
it is also non-degenerate in
almost all positive characteristics (generally when $p$ exceeds the degree of the surface in question):

\begin{Lemma}[Flecnodal divisor]
\label{lem:flecnodaldivisor}
Let $p\neq 2,3$ and let $S \subset \PP^3(\KK)$ be a smooth quartic surface. 
Then there exists an effective divisor
$\mF_S \in {\mathcal O}_S(20)$ such that 
\[
\operatorname{supp}(\mF_S) = \{P\in S; \text{ there exists  a line } L \subset\PP^3 \mbox{ with }  i_P(S,L)\geq 4\}.
\]
\end{Lemma}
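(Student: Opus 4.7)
The plan is to construct $\mF_S$ as a classical projective covariant of the defining equation $F \in k[x_0,\ldots,x_3]_4$ of $S$, and then invoke Voloch's theorem to preclude the degenerate possibility $\mF_S \equiv 0$ in positive characteristic.

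To pin down the support locally, work in an affine chart in which $S$ is the graph $\{z = f(x,y)\}$ of a function $f$ with vanishing value and first derivatives at the base point $P$, so that $T_P S = \{z = 0\}$. A line $L$ through $P$ with direction $(a : b) \in \PP^1$ in this tangent plane has $i_P(S, L) \geq 3$ iff the binary quadratic $Q_2(a,b) = a^2 f_{xx} + 2ab\, f_{xy} + b^2 f_{yy}$ vanishes at $(a,b)$, and $i_P(S, L) \geq 4$ iff in addition the binary cubic $Q_3(a,b) = \sum_{i + j = 3} \binom{3}{i} a^i b^j\, \partial_x^i \partial_y^j f$ also vanishes. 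Hence $P$ is a flecnodal point iff $Q_2$ and $Q_3$ share a common zero in $\PP^1$, which is detected by the vanishing of their Sylvester resultant.

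To globalize, expand $F(P + tv) = \sum_{k = 1}^{4} t^k \Phi_k(P,v)$, where the $\Phi_k$ are bihomogeneous of bidegree $(4-k, k)$ in $(P,v)$. On the $\PP^1$-bundle of tangent directions over the smooth locus of $S$, the restrictions of $\Phi_2$ and $\Phi_3$ to each fibre reproduce (up to scalars) the binary forms $Q_2$ and $Q_3$, and the scheme-theoretic intersection of their zero loci projects to a divisor on $S$ whose class, by the Salmon--Cayley degree count (cf.~\cite{Salmon}, \cite{Clebsch}), equals $(11d - 24)H = 20 H$ for $d = 4$. Equivalently, one computes the $5 \times 5$ Sylvester resultant of $\Phi_2|_{T_PS}$ and $\Phi_3|_{T_PS}$ in a moving tangent frame: its entries are polynomials of controlled degree in the partial derivatives of $F$, and a bidegree bookkeeping assembles them into a global section of $\OO_S(20)$. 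By construction, the support of this section coincides with the flecnodal locus.

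The substantive point is nondegeneracy: a priori the resultant could vanish identically on $S$, so that $\mF_S$ would be the zero section. This genuinely does fail in very small characteristic, which is the reason for the exclusions $p = 2, 3$. Under the standing hypothesis $p \neq 2, 3$, so that either $p = 0$ or $p \geq 5$, \cite[Theorem 1 and Proposition 1]{Voloch} guarantees that the flecnodal divisor on a smooth surface of degree $d = 4$ in $\PP^3$ is a proper effective divisor, i.e., does not contain all of $S$. This is the main obstacle; the construction of $\mF_S$ itself is a formal projective-invariant computation, and once nondegeneracy is secured, the description of its support from the local analysis in the second paragraph yields the statement of the lemma.
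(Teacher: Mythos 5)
Your proposal is correct and follows essentially the same route as the paper, which states this lemma without proof by appealing to the classical construction of the flecnodal covariant of degree $11d-24=20$ (Salmon, Clebsch) together with \cite[Thm~1 \& Prop.~1]{Voloch} for non-degeneracy when $p=0$ or $p>d=4$. Your local resultant description of the support and the reduction of the only substantive issue to Voloch's non-degeneracy result match the intended argument.
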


Note that any line contained in $S$ is  a component of the support of $\mF_S$.
Thus we get an automatic upper bound of 80 lines on any smooth quartic in characteristic different from $2, 3$ (see \cite[p.~106]{Clebsch}, \cite[Cor.~1]{Voloch}).
Similarly, for a curve  $C$ of degree $c$ on $S$ not contained in the support of $\mF_S$,
we have $C.\mF_S = 20c$ and thus $C$ meets at most $20c$ lines on $S$.
Often this can be improved by studying the divisor obtained from $\mF_S$ by subtracting all lines contained in $S$,
or by studying the intersection behaviour with some specific lines in detail, 
as we shall exploit in this and the next section.

Recall that any line (just like any smooth rational curve)
has self-intersection number $-2$ on $S$.
For a line $\ell\subset S$, we obtain
$\ell.(\mF_S-\ell)=22$,
so $\ell$ meets at most 22 other lines on $S$, unless $\ell$ has multiplicity strictly greater than 1 in $\mF_S$.
For lines of the second kind, we have seen better bounds in Proposition \ref{prop}.
For lines of the first kind, there is a different elementary argument due to Segre improving the above bound:

\begin{Lemma} {\rm (\cite[p.~88]{Segre})}
\label{lem:linesfirstkind}
\label{lem:I}
Let $\ell$ be a line of the first kind on a smooth quartic $S$.
Then $\ell$ is met by at most $18$ lines  $\ell' \neq \ell$ on $S$.
\end{Lemma}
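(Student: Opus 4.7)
I plan to refine Segre's use of the flecnodal divisor $\mF_{S}\in|\mathcal{O}_{S}(20)|$ from Lemma \ref{lem:flecnodaldivisor}, adding an analysis at the parabolic points of $S$ lying on $\ell$. The two key inputs are: (i) $\ell$ is contained in $\mF_{S}$ with multiplicity exactly one when $\ell$ is of the first kind, and (ii) the residual divisor $\mathcal{R}:=\mF_{S}-\ell$ picks up four extra units of intersection with $\ell$ at the parabolic points of $S$ on $\ell$, beyond the contribution of the $N$ lines on $S$ meeting $\ell$.

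For (i), at a general $P\in\ell$ the two asymptotic tangent directions of $S$ are $T_{P}\ell$ (with contact $\infty$ with $S$) and $T_{P}F_{\pi(P)}$, the direction of the cubic $F_{\pi(P)}\subset H_{\pi(P)}$ at $P$; the second one gives a line of contact $1+\mu_{P}$ with $S$, where $\mu_{P}$ is the order of contact of $T_{P}F_{\pi(P)}$ with $F_{\pi(P)}$ at $P$. As $\ell$ is of the first kind, $P$ is not a flex of $F_{\pi(P)}$ generically, so $\mu_{P}=2$ and only $\ell$ itself realizes $4$-contact at a general $P$. Hence $m_{\ell}=1$, and
$$
\mathcal{R}\cdot\ell \;=\; \mF_{S}\cdot\ell - \ell^{2} \;=\; 20 - (-2) \;=\; 22.
$$
Each of the $N$ lines $\ell'\neq\ell$ on $S$ meeting $\ell$ is a component of $\mathcal{R}$ contributing at least $1$ to $\mathcal{R}\cdot\ell$ at the point $\ell\cap\ell'$, which already yields the weaker bound $N\leq 22$.

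For (ii), I note that $\pi|_{\ell}\colon\ell\to\PP^{1}$ has degree $3$ (since $\ell\cdot F=3$), so by Riemann--Hurwitz its ramification divisor on $\ell$ has degree $4$. At each point $P$ of this divisor the line $\ell$ is tangent to $F_{\pi(P)}$ (possibly at a singular point of the fibre), whence $T_{P}F_{\pi(P)}=T_{P}\ell$ and the second fundamental form of $S$ degenerates at $P$, making $P$ a parabolic point of $S$ on $\ell$. The decisive step is to show that at each such $P$, $\mathcal{R}$ picks up an additional unit of intersection with $\ell$ beyond any contribution from a line of $S$ through $P$. Granting this,
$$
22\;=\;\mathcal{R}\cdot\ell \;\geq\; N+4, \qquad \text{so } N\leq 18.
$$

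The hard part will be the local verification at a parabolic point. In local coordinates with $\ell=\{y=z=0\}$ and $S=\{z-F(x,y)=0\}$, so $F(x,0)=0$, the second fundamental form along $\ell$ reduces to $2F_{xy}(x,0)uv+F_{yy}(x,0)v^{2}$, and the parabolic locus on $\ell$ is the zero scheme of $F_{xy}(x,0)$, a polynomial of degree $4$ (giving the four parabolic points). As $P$ approaches such a zero, the "second" asymptotic direction $v_{P}=-2F_{xy}(x,0)/F_{yy}(x,0)$ collapses onto the direction of $\ell$, and one must check that this forces the branch of $\mathcal{R}$ through $P$ to be tangent to $\ell$; the resulting tangency contributes the required extra $+1$ to $\mathcal{R}\cdot\ell$ at $P$.
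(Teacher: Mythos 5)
Your strategy is genuinely different from the paper's: there, one restricts the residual cubic $\Gamma_\lambda$ and its Hessian determinant to $\ell$ and bounds the degree of their resultant $r(\lambda)$ by $18$ via the Sylvester matrix (three rows of degree $1$ and three of degree $5$ in $\lambda$), finally checking that a totally reducible fibre produces a triple root. Your idea of extracting the same bound from $\mF_S\cdot\ell=20$ is attractive and the numerology $22=18+4$ is no accident, but the proof has a genuine gap at exactly the step you defer, and the deferred claim is in fact false. Work in the Monge form $z=F(x,y)$, $\ell=\{y=z=0\}$, $F=yG$. The flecnodal locus is cut out locally by $\mathrm{Res}_{(u:v)}(II,III)$, where $II=F_{xx}u^2+2F_{xy}uv+F_{yy}v^2$ and $III$ is the cubic of third derivatives. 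At a simple ramification point of $\ell\to\PP^1$ one has $G_x=0$, $G_{xx}\neq 0$ and $F_{yy}=2G_y\neq 0$; the two roots of $II$ collide at the direction of $\ell$, with product $w_1w_2=yG_{xx}/F_{yy}$, and the dominant term of the resultant is $F_{yy}^3\,(3G_{xx})^2\,w_1w_2=9G_{xx}^3F_{yy}^2\,y$. Thus $\mF_S$ is locally $y$ times a unit, i.e.\ $\mR=\mF_S-\ell$ does \emph{not} pass through a general parabolic point, and no extra intersection is picked up there. (The same expansion does confirm your claim (i) that $m_\ell=1$ exactly for lines of the first kind, which your pointwise count of flecnodal directions does not by itself establish, since a multiplicity is an order of vanishing, not a set-theoretic statement.)

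What is actually true is that $\mR|_\ell$ factors, up to a nonzero constant, as $A\cdot B$ with $A=\bigl(2G_xG_{xxx}-3G_{xx}^2\bigr)\big|_{y=0}$ of degree $4$ and $B=\bigl(G_{xx}G_y^2-2G_{xy}G_xG_y+G_{yy}G_x^2\bigr)\big|_{y=0}$ of degree $18$. The factor $B$ vanishes precisely where the second asymptotic direction is flecnodal — this is where all lines meeting $\ell$ contribute, and it is the geometric incarnation of the paper's resultant $r(\lambda)$ — while $A$ is a Schwarzian-type covariant of the degree-$3$ projection $\ell\to\PP^1$ whose four zeros are generically \emph{disjoint} from the ramification points (at a simple ramification point $A=-3G_{xx}^2\neq0$). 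A repaired version of your argument therefore exists: show that every line meeting $\ell$ contributes at least $1$ to $B$, whence $N\leq\deg B=18$; but identifying and controlling $B$ is essentially the paper's Hessian/resultant computation in different clothing, including the check that three coplanar lines contribute $3$. As written, your argument proves only $N\leq 22$, which the paper already records before Lemma \ref{lem:I}.
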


\begin{proof}  
The proof for complex quartics can be found in \cite[$\S$~2]{Segre}. 
Since the arguments are genuinely geometric,
using points of inflection and the Hessian,
they remain valid in the more general set-up under consideration
as we sketch below for completeness.

After a projective transformation
we may assume that $\ell = \operatorname{V}(x_3, x_4)$.
Fix a generator of the ideal of $S$,
$$
f = \sum_{i,j=1}^{4} x_3^i \cdot x_4^j \cdot \alpha_{i,j}(x_1, x_2)\;\; \mbox{ where } \alpha_{i,j} \in  \KK[x_1,x_2].
$$
 If necessary, a linear transformation in $x_3, x_4$ ensures 
 that the cubic residual to the line $\ell$ in  $S \cap \operatorname{V}(x_3)$
is smooth and the line $\ell$ contains none of its inflection points.
We put $\Gamma_{\lambda}$ to denote the planar cubic  over $\KK(\lambda)$
residual to $\ell$
in the intersection $S \cap \operatorname{V}(x_4 - \lambda x_3)$.
The curve $\Gamma_{\lambda}$ meets the line  $\ell$ in three points given by 
the polynomial
\begin{equation*} \label{eq-restrictionofthecubic} 
g_{\lambda}(x_1,x_2) := \alpha_{1,0}(x_1, x_2) + \lambda  \cdot \alpha_{0,1}(x_1, x_2).
\end{equation*}
We shall be interested in the problem when these include points of inflection of  $\Gamma_{\lambda}$.
Equivalently, the  determinant of the Hessian of  $\Gamma_{\lambda}$  vanishes when restricted to $\ell$:  
\begin{equation*} \label{eq-hessian}
h_{\lambda}(x_1,x_2) := \operatorname{det} 
\left[\begin{array}{ccc}
\frac{\partial^2 g_{\lambda}}{\partial x_1^2}                &  \frac{\partial^2 g_{\lambda}}{\partial x_1 \partial x_2} &  
\frac{\partial \mbox{ \vspace*{2ex}} }{\partial x_1} \sum_{i+j=2} \lambda^j \alpha_{i,j} \\
\vdots    & \frac{\partial^2 g_{\lambda}}{\partial x_2^2}             & 
\frac{\partial \mbox{ \vspace*{2ex}} }{\partial x_2} \sum_{i+j=2} \lambda^j \alpha_{i,j}   \\
                                                           & \ldots  & \sum_{i+j=3} \lambda^j \alpha_{i,j}                        
\end{array}
\right] = 0 \,.
\end{equation*} 
For generic $\lambda$,  the homogeneous polynomials $g_{\lambda}, h_{\lambda}$ 
have no common factor
because  the line $\ell$ is of the first kind.
 Thus their resultant with respect to $x_1$
gives a non-zero polynomial $r(\lambda)\in\KK[\lambda]$ times a power of $x_2$.
Since $r(\lambda)$ is the determinant of the Sylvester matrix of 
$g_{\lambda}(x_1, 1) \,, h_{\lambda}(x_1, 1)$, 
we have
\begin{eqnarray}
\label{eq:deg18}
\deg(r({\lambda})) \leq 18 \, .
\end{eqnarray}
To see this, one checks that the Sylvester matrix 
has three rows with entries of degree 5 in $\lambda$
 and three rows with entries of degree 1.
Thus, $\ell$ meets the residual cubics $\Gamma_\lambda$ in at most 18 points of inflection.
This count includes the singular fibres;
in fact,
if $\ell$ meets a line $\ell_0$ on $S$,
then the latter is a component of a residual cubic $\Gamma_{\lambda_0}$,
and the Hessian of $\Gamma_{\lambda_0}$ has zero determinant on $\ell_0$.
That is, $\ell_0$ contributes a zero at $\lambda_0$ to the polynomial $r(\lambda)$.
To deduce the claim of the lemma from \eqref{eq:deg18},
it remains to check that  if $\Gamma_{\lambda_0}$ decomposes into three lines, 
then
$\lambda_0$ is a triple root of $r(\lambda)$.
This can be verified by a direct (computer-aided) calculation.
%
\end{proof}


Combining Lemma \ref{lem:I} and Proposition \ref{prop},
we obtain the following general sharp bound 
(thanks to Lemma \ref{lem:degen}, Example \ref{ex}),
correcting Segre's original claim \cite[p.~94]{Segre}:

\begin{Corollary}[Proposition \ref{prop1}]
\label{cor:20}
Any line on a smooth quartic surface $S$ meets at most 20 other lines on $S$.
\end{Corollary}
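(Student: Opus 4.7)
The plan is to split the argument according to whether $\ell$ is a line of the first or of the second kind, and invoke the two results already established. Every line on $S$ falls into exactly one of these two classes by the dichotomy introduced in Section~\ref{s:set-up}, so once both cases are bounded by $20$, the corollary follows.

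For the first case, suppose $\ell$ is a line of the first kind. Then Lemma~\ref{lem:linesfirstkind} (which is Segre's Hessian/resultant argument, valid in any characteristic $p\neq 2,3$) directly yields that $\ell$ is met by at most $18$ other lines on $S$, which is of course $\leq 20$. No further work is needed here.

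For the second case, suppose $\ell$ is a line of the second kind, and let $R$ denote its ramification type, i.e.~the ramification profile of the degree~$3$ covering $\ell\to\PP^1$ associated with the elliptic pencil $\pi$. By the classification preceding Proposition~\ref{prop}, $R$ is one of $1^4$, $2,1^2$ or $2^2$, and Proposition~\ref{prop} gives
\[
N \in G_R = \{12\},\ \{15,16\},\ \text{or}\ \{18,19,20\}
\]
respectively. In every case $N\leq 20$, which disposes of lines of the second kind.

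Combining the two cases yields the claimed bound of $20$. The bound is sharp: by Lemma~\ref{lem:degen} the family $\mZ$ contains smooth quartics for which the $\pi$-fibration attains the maximal configuration of six fibres of type $IV$-or-$(I_1,I_3)$ together with one or two further reducible fibres of the form line$\,+\,$conic, and an explicit such surface (Example~\ref{ex}) realises $N=20$. The main technical work is thus absorbed entirely in Lemma~\ref{lem:linesfirstkind} and in Proposition~\ref{prop}, so the only step in this corollary itself is the case distinction described above.
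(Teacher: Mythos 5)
Your proof is correct and is essentially identical to the paper's own argument: the authors likewise obtain Corollary~\ref{cor:20} by combining Lemma~\ref{lem:linesfirstkind} (at most $18$ for lines of the first kind) with Proposition~\ref{prop} (at most $20$ for lines of the second kind, attained only for ramification type $R=2^2$), and they cite Lemma~\ref{lem:degen} and Example~\ref{ex} for sharpness exactly as you do. Nothing further is needed.
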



\subsection{Proof of Proposition \ref{prop1}}

Part (a) is the content of Corollary \ref{cor:20}.
For (b), (c), assume that a line $\ell\subset S$ meets at least 19 other lines on $S$.
Then $\ell$ is of the second kind by Lemma \ref{lem:linesfirstkind}.
By Proposition \ref{prop}, the ramification type is $R=2^2$.
Hence $S$ is projectively equivalent (over $\bar k$)
to a quartic in the family $\mZ$ by Lemma \ref{lem:Z}.
Proposition \ref{prop1} (b), (c) 
then follow from Lemma \ref{lem:degen}.
\qed

\subsection{Proof of Theorem \ref{thm} away from the family $\mZ$}

An elliptic fibration  on a K3 surface can have at most 12 reducible fibres, of Kodaira type $I_2$, 
or less if Kodaira types $I_3, III, IV$ are involved. 
This follows from computing the Euler-Poincar\'e characteristic $e(S)=24$ 
as a sum of the fibre contributions
 (Segre speaks of the Zeuthen-Segre invariant in \cite[$\S$~8]{Segre}).

\begin{Lemma}[Twelve lines]
\label{lem:twelvelines}
If a line on $S$ is met by more than other $12$ lines, then it is met by $3$ coplanar lines.
\end{Lemma}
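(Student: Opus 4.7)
The plan is to argue by contradiction using the elliptic fibration $\pi:S\to\PP^1$ induced by $\ell$ via the pencil of planes $\{H_t\}_{t\in\PP^1}$ containing $\ell$. So I would suppose that $\ell$ is met by $N>12$ other lines on $S$ while no fiber of $\pi$ consists of three coplanar lines, and aim at a contradiction with $e(S)=24$.

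First I would make the geometric reduction. Any line $\ell'\subset S$ that meets $\ell$ lies in the plane $H_t$ spanned by $\ell$ and $\ell'$, hence appears as a component of the planar cubic $C_t$ residual to $\ell$ in $H_t\cap S$. According to the classification of singular fibres recalled in Section~\ref{s:set-up}, the possibilities for $C_t$ (as a scheme) are an irreducible cubic (contributing no line meeting $\ell$), a line plus a conic (Kodaira types $I_2$ or $III$, contributing a single line meeting $\ell$), or three lines (types $I_3$ or $IV$, which would contribute three coplanar lines through $\ell$). The standing assumption rules out the last case, so every line of $S$ distinct from $\ell$ and meeting $\ell$ arises as the line component of a fibre of type $I_2$ or $III$, and contributes exactly once.

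Next I would carry out the Euler-number bookkeeping. Writing $a$, $b$ for the numbers of fibres of type $I_2$, $III$ respectively, and denoting by $c$, $d$ the numbers of $I_1$, $II$ fibres (the remaining types compatible with a plane-cubic fibre on a smooth quartic), the Euler--Poincar\'e identity for the K3 surface $S$ reads
\[
2a+3b+c+2d \;=\; e(S) \;=\; 24.
\]
Dropping the nonnegative term $c+2d$ and estimating crudely gives
\[
2(a+b) \;\le\; 2a+3b \;\le\; 24,
\]
so $a+b\le 12$. Since the total number of lines on $S$ meeting $\ell$ (other than $\ell$ itself) equals $a+b$ under our assumption, we obtain $N\le 12$, contradicting $N>12$.

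The argument is essentially bookkeeping, and I expect the only mildly subtle point to be the opening reduction: namely, that every line $\ell'\subset S$ meeting $\ell$ is automatically a component of some fibre of $\pi$ and cannot ``cross'' $\ell$ transversally without being planar with it. Once this step identifies the lines meeting $\ell$ with the line components of reducible fibres of $\pi$, the Euler characteristic count is immediate and sharp, which is why the threshold $12$ is exactly the right one.
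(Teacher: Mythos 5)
Your proposal is correct and follows essentially the same route as the paper: identify every line meeting $\ell$ with a component of a reducible fibre of the induced fibration, observe that without an $I_3$ or $IV$ fibre each reducible fibre ($I_2$ or $III$) contributes exactly one such line and at least $2$ to $e(S)=24$, and conclude $N\le 12$. The only difference is that you spell out the Euler-number bookkeeping that the paper leaves implicit.
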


\begin{proof}
Each line meeting a given line $\ell$ is part of a reducible fibre of $\pi$.
If more than 12 lines would lie in fibres of Kodaira types $I_2, III$,
then their contributions to $e(S)$ would exceed $24$.
Hence, by inspection of  Table \ref{tab:F},
there are lines contained in a fibre of type $I_3$ or $IV$.
That is, they are coplanar with $\ell$.
\end{proof}

%
%
%
%
%
%
%

Now we can adapt the reasoning of  \cite[$\S$~9]{Segre} 
to show the following lemma.
\begin{Lemma}[64 lines]
\label{lem:sixtyfourlines}
\label{lem:64}
Assume that  the quartic $S$ is not projectively equivalent to a quartic from the family $\mZ$.

\begin{enumerate}
\item[(a)]
The surface $S$ contains at most $64$ lines.

\item[(b)] 
If there are $64$ lines on $S$, then four of them are coplanar lines of the first kind. 
\end{enumerate}
\end{Lemma}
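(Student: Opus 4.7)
The plan is to adapt Segre's reasoning from $\S$~9 of \cite{Segre}, now exploiting the improved uniform bound on the number $n_\ell$ of other lines on $S$ meeting a given line $\ell$. Throughout, let $N$ denote the total number of lines on $S$.

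First I would establish that every line $\ell\subset S$ satisfies $n_\ell\leq 18$. For lines of the first kind this is Lemma~\ref{lem:I}. For lines of the second kind, the hypothesis $S\notin\mZ$ rules out ramification type $R=2^2$, so Proposition~\ref{prop} gives the strictly stronger bound $n_\ell\leq 16$.

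The core of the argument is a \emph{four-line plane trick}. Suppose $S$ contains a plane $H$ with $H\cap S=\ell_1+\ell_2+\ell_3+\ell_4$. Any other line $m\subset S$ has $m\cdot H=1$, so $m$ meets $H\cap S$ in a single point, which must lie on some $\ell_i$. Moreover, at each intersection point $p=\ell_i\cap\ell_j$ the tangent plane $T_pS$ contains both $\ell_i$ and $\ell_j$, and hence coincides with $H$; any third line of $S$ through $p$ would then lie in $H\cap S$ and agree with one of the $\ell_k$. Thus every line outside $\{\ell_1,\ldots,\ell_4\}$ meets exactly one of the $\ell_i$, so
$$
N-4\leq \sum_{i=1}^4 (n_{\ell_i}-3)\leq 4\cdot 15 =60,
$$
yielding $N\leq 64$. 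In the equality case $N=64$ one must have $n_{\ell_i}=18$ for every $i$, and the stronger bound for second-kind lines then forces each $\ell_i$ to be of the first kind; this is (b).

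It remains to produce a four-line plane whenever $N$ is close to the bound. By Lemma~\ref{lem:twelvelines}, if some line $\ell\subset S$ is met by more than $12$ others, then three of those lines are coplanar with $\ell$ and together with $\ell$ span a four-line plane. If instead every $n_\ell\leq 12$, then for any fixed $\ell$ the elliptic fibration $\pi_\ell$ admits reducible fibres only of types $I_1,I_2,II,III$; combining this restricted fibre structure with the flecnodal bound (Lemma~\ref{lem:flecnodaldivisor}) leads via a direct count to $N\leq 64$. The main obstacle is precisely this sub-case, where the covering estimate above is unavailable and one must instead bound $N$ through a more delicate analysis of the flecnodal divisor and, if necessary, the sections of $\pi_\ell$ produced by the skew lines.
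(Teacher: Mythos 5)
Your treatment of the case where $S$ contains four coplanar lines is correct and coincides with the paper's: every other line meets the plane section $\ell_1+\ell_2+\ell_3+\ell_4$ in one point, the uniform bound $n_{\ell_i}\leq 18$ (from Lemma \ref{lem:I} for the first kind and from Proposition \ref{prop} with $R\neq 2^2$ for the second kind) gives $N\leq 4+4\cdot 15=64$, and equality forces $n_{\ell_i}=18$, hence all four lines of the first kind. That is exactly parts (a) (first paragraph) and (b) of the paper's proof.

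The genuine gap is the complementary case, where $S$ contains no three (hence no four) coplanar lines. You correctly reduce to the situation where every line meets at most $12$ others (via Lemma \ref{lem:twelvelines}), but then you only assert that ``a direct count'' with the flecnodal divisor closes the case, and you explicitly concede that this sub-case is ``the main obstacle.'' It is, and it does not follow from the fibre-type restriction plus $\deg\mF_S=80$ alone: a priori the flecnodal degree permits up to $80$ pairwise disjoint lines. The paper closes this case by a three-way split. First, if some pair of incident lines $\ell_1,\ell_2$ has its residual conic $Q\in|\OO_S(1)-\ell_1-\ell_2|$ \emph{not} contained in $\mF_S$, then $Q.\mF_S=40$ bounds the lines meeting $Q$, and since $\ell_1,\ell_2$ each meet at most $12$ lines one gets at most $2+2\cdot 11+36=60$ lines. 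Second, if every such residual conic lies in $\mF_S$ and there are at least $9$ incident pairs, these conics eat up degree $\geq 18$ of $\mF_S$, leaving at most $80-18=62$ lines. Third, if there are at most $8$ incident pairs, then all remaining lines are pairwise disjoint, and the decisive input is lattice-theoretic, not flecnodal: the classes of the lines span a sublattice of $\NS(S)$ whose rank is bounded by the Picard number $\rho(S)\leq 22$, giving at most $21+N\leq 29$ lines. Without this last step (or some substitute for it) your argument does not close, so as written the proof is incomplete precisely where you flagged the difficulty.
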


\begin{proof}
{\it (a)} If $S$ contains $4$ coplanar lines, then each line on $S$ meets one of them. By Lemma~\ref{lem:linesfirstkind} and 
Proposition~\ref{prop} we obtain the desired bound.
Hence we can assume that $S$ contains no 4 (or 3) coplanar lines.

Suppose  that $S$ contains  two coplanar lines $\ell_1$, $\ell_2$ and the smooth conic in  
$|{\mathcal O}_S(1) - \ell_1 -\ell_2|$ is no  component of the flecnodal divisor.
By Lemma~\ref{lem:twelvelines} our assumption implies that  $\ell_1$, $\ell_2$ are met by at most $12$ lines on $S$.
By  Lemma~\ref{lem:flecnodaldivisor} the intersection number of the conic with  ${\mathcal F}_S$ is $40$,
so the conic is met by at most $40$ lines ($\ell_1$, $\ell_2$ included twice each). Altogether, we get at most 
$2+ 2 \cdot 11 + 36 = 60$ lines.

It remains to consider the case where for any two incident lines the residual conic is a component of  the flecnodal divisor.
If $S$ contains at least  $9$ such pairs of incident lines, 
then the number of lines is bounded by $\deg({\mathcal F}_S) - 2\cdot 9 = 62$.

Otherwise, we can assume that  $S$ contains only $N\leq 8$ pairs of incident lines. 
Thus any 
line on $S$ that is not an element of one of these pairs meets no lines on $S$ at all. 
Observe that the pairs of incident lines span a sublattice of $\NS(S)$ of rank at least $N+1$.
Since the Picard number of $S$ cannot exceed $22$,
 we infer that the number of lines on $S$ cannot exceed $(2 \cdot N) + (22 - (N+1)) = 21+N\leq 29$.

{\it (b)} The case-by-case analysis in the proof of (a)
shows that 64 lines require the existence of 4 coplanar lines on $S$
such that each in fact meets exactly 18 lines (in agreement with Lemma~\ref{lem:familygeil33}). 
Prop.~\ref{prop}
then rules out lines of the second kind, since the hypothesis of Lemma \ref{lem:64} avoids the third alternative of Prop.~\ref{prop} by Lemma \ref{lem:familygeil33}.
\end{proof}

\section{64 lines on $\mZ$}
\label{s:II}

The aim of this section is to prove the following proposition:

\begin{Proposition}[64 lines again]
\label{prop:64}
If the smooth quartic $S$ is projectively equivalent to a quartic from the family $\mZ$,
then it contains at most $64$ lines.
\end{Proposition}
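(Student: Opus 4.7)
The plan is to exploit the order-$3$ symplectic automorphism visible on every member of the family $\mZ$.

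First, check that
$$
\sigma:\;[x_1:x_2:x_3:x_4]\longmapsto [\zeta x_1:\zeta^{-1}x_2:x_3:x_4]\qquad (\zeta^3=1,\;\zeta\neq 1)
$$
preserves the defining polynomial of $\mZ$. Combined with $\det(\sigma)=1$ and the standard residue description $\omega_S=\mathop{\rm Res}_S\bigl(\Omega_{\PP^3}/f\bigr)$ of the holomorphic $2$-form, this shows that $\sigma$ acts on $S$ as a symplectic automorphism of order~$3$. Since $S$ is smooth, the line $\ell':=\{x_1=x_2=0\}$ is not contained in $S$, so the $\sigma$-fixed locus on $S$ is $0$-dimensional and, by Nikulin's classification of finite symplectic group actions on K3 surfaces, consists of exactly six points.

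Next, enumerate the $\sigma$-invariant lines of $\PP^3$: they are $\ell$, $\ell'$, the pencil of lines through $[1{:}0{:}0{:}0]$ contained in $\{x_2=0\}$, and the pencil through $[0{:}1{:}0{:}0]$ contained in $\{x_1=0\}$. Substituting these parametrisations into the equation of $\mZ$, one finds that the $\sigma$-invariant lines actually lying on $S$ are exactly: $\ell$ (always); $\{x_2=x_3=0\}$ (iff $x_3\mid g$); and $\{x_1=x_4=0\}$ (iff $x_4\mid g$). In particular every $\sigma$-fixed line of $S$ meets~$\ell$, so every line of $S$ disjoint from $\ell$ automatically lies in a $\sigma$-orbit of size three.

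Now combine this orbit structure with the elliptic fibration $\pi:S\to\PP^1$ induced by $\ell$. Any line on $S$ different from $\ell$ either meets $\ell$ (and is therefore a component of a reducible fibre of $\pi$, with at most $20$ such lines altogether by Proposition~\ref{prop1}) or is disjoint from $\ell$ (and is then a section of $\pi$). No section is $\sigma$-fixed, so the number of sections equals $3m$ for some integer $m\geq 0$; the bound of $64$ lines then reduces to $m\leq 15$ in the non-degenerate case, and to $m\leq 14$ when both ramified $I_1$-fibres degenerate to $I_2$ (cf.~Lemma~\ref{lem:degen}).

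The main obstacle is this upper bound on the number $m$ of $\sigma$-orbits of sections. The strategy is to control each such orbit by its intersections with the six $I_3$-fibres of $\pi$ (whose three components are cyclically permuted by $\sigma$, so each orbit of sections meets each $I_3$-fibre in three points forming a single $\sigma$-orbit), with the flecnodal divisor $\mF_S$ (of degree $20$ along each line), and with the possibly degenerate ramified fibres at $0$ and $\infty$. Assembling these constraints should yield the required bound on $m$. Equality is attained by Schur's quartic~\eqref{eq:Schur}, for which $g=x_3^4-x_4^4$ is divisible by neither $x_3$ nor $x_4$, giving one $\sigma$-fixed line (namely $\ell$), $18$ lines distributed among the six $I_3$-fibres, and $15$ $\sigma$-orbits of sections, totalling $1+18+45=64$ lines.
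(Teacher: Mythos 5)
Your set-up is sound and overlaps with the beginning of the paper's argument: the order-$3$ symplectic automorphism $\sigma$, the observation that no section of the fibration induced by $\ell$ is $\sigma$-invariant (the paper sees this because $\sigma$ rotates the components of the $I_3$ and $IV$ fibres), and the resulting count $\#\{\text{lines}\}=1+N_\ell+3m$ with $N_\ell\in\{18,19,20\}$ by Proposition~\ref{prop}. But the proposal stops exactly where the real difficulty begins. To conclude you must prove $m\le 15$ when $N_\ell=18$ and $m\le 14$ as soon as $\pi$ acquires an $I_2$ fibre (already one such fibre forces $N_\ell\ge 19$, so your phrasing ``when both ramified fibres degenerate'' is also slightly off), and for neither bound do you give an argument: the sentence ``assembling these constraints should yield the required bound on $m$'' is a placeholder for essentially the entire content of Section~\ref{s:II}. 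Note that the flecnodal divisor alone only bounds the total number of lines by $80$, and recording how an orbit of sections meets the six $I_3$/$IV$ fibres imposes no upper bound on the number of orbits by itself, so a genuinely new input is required at this point.

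For comparison, the paper first bounds the total at $66$ (Corollary~\ref{cor:66}) via a hyperplane splitting into $\ell_0$ and an $I_3$ or $IV$ fibre; even this step needs to know whether those fibre components are of the first or second kind (Lemma~\ref{lem:I-II}, where the Schur quartic appears as the exceptional case), which your outline never addresses. That gives $m\le 15$. The critical remaining case is $N_\ell\in\{19,20\}$ with $m=15$, i.e.\ $\pi_0$ has an $I_2$ fibre $\ell_1+Q$. There the paper passes to the \emph{second} elliptic fibration $\pi_1$ induced by $\ell_1$, shows that its reducible fibres other than $\ell_0+Q$ come in $\sigma$-triples, classifies the possible fibre configurations (Lemma~\ref{lem:config}), and in the hardest case decomposes $\mF_S$ explicitly as $65$ lines plus six conics plus a residual divisor $\mR$ of degree $3$, extracting a contradiction from $\ell_0.\mF_S=\ell_1.\mF_S=20$. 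None of this machinery is present in, or easily recoverable from, your outline; as written the proposal establishes the counting framework but not the proposition.
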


For this purpose, we fix the smooth quartic $S\in\mZ$ and the line $\ell_0$ of the second kind 
with induced elliptic fibration 
\[
\pi_0: S\to \PP^1
\]
as described in Section~\ref{s:cons}.
We first prove the following lemma
which will help us narrow down the possible counterexamples to Proposition \ref{prop:64}
significantly.

\begin{Lemma}
\label{lem:I-II}
If a line $\ell'$ in a fibre of $\pi_0$ is of the second kind,
then 
either $\ell'$ meets at most 16 lines on $S$ (and the fibre is unramified, of Kodaira  type $I_3$)
or $S$ is the Schur quartic \eqref{eq:Schur}
(and the fibre is ramified, of Kodaira type $IV$).
\end{Lemma}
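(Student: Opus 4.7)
The plan is to split into cases according to the Kodaira type of the fibre $F_0$ of $\pi_0$ that contains $\ell'$. By Lemmas~\ref{lem:unram} and \ref{lem:ram} combined with the proof of Proposition~\ref{prop} for $R=2^2$, the only fibres of $\pi_0$ that can contain a line are the unramified $I_3$, the ramified or unramified $IV$, and the ramified $I_2$. In the explicit $\mZ$-model $S:\ x_3x_1^3+x_4x_2^3+x_1x_2q+g=0$ with $\ell_0=\{x_3=x_4=0\}$, the lines in each fibre type are easy to describe in coordinates, which turns the second-kind condition on $\ell'$ into a computable algebraic constraint on $q,g$.

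For $F_0$ of type $I_3$ (unramified), Proposition~\ref{prop} shows that the only way $\ell'$ can meet more than $16$ lines is with its own ramification type $R'=2^2$. Suppose this were the case; then by Proposition~\ref{prop1}(b), $S$ would carry a second $\mZ$-presentation, now with $\ell'$ as the distinguished line, yielding a second fibration $\pi'$ of the same shape and a second symplectic automorphism of order~$3$. The plane $H_{\lambda_0}$ containing $F_0$ would then simultaneously be the $F_0$-fibre of $\pi_0$ and a fibre of $\pi'$ whose residual to $\ell'$ is the triangle $\ell_0+\ell_1+\ell_2$ (so of type $I_3$, necessarily unramified for $\pi'$). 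Imposing both second-kind conditions in a common set of coordinates gives an overdetermined system on $q,g$ and on the coefficients defining $\ell'$, which cannot be satisfied without making $S$ singular.

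For $F_0$ of ramified $IV$ type, Lemma~\ref{lem:degen} forces $x_4\mid q$ (up to swapping the roles of $0$ and $\infty$). The three concurrent lines of $F_0$ then arise as the roots of $x_1^3+g(1,0)\,x_3^3=0$ in the plane $\{x_4=0\}$, all meeting at $P_0=(0{:}1{:}0{:}0)\in\ell_0$. I would parametrize $\ell'$ as one of them, pass to the pencil of planes through $\ell'$, and apply the Hessian criterion from Lemma~\ref{lem:linesfirstkind}: requiring every residual cubic to be inflected along $\ell'$ produces a polynomial system on the remaining coefficients of $q$ and $g$. Solving it and using the projective automorphisms preserving the $\mZ$-form and the pair $(\ell_0,\ell')$ should reduce $S$ to Schur's quartic~\eqref{eq:Schur}.

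The two leftover cases are ruled out similarly. A ramified $I_2$ fibre requires $x_3\mid g$ or $x_4\mid g$, and asking its line component to be of second kind forces the conic component to degenerate to a double line, i.e., a node of $S$, contradicting smoothness. An unramified $IV$ fibre is the codimension-one collision of a generic $(I_1,I_3)$ pair, and the same Hessian computation as above gives an incompatibility with smoothness. The main obstacle I expect is the explicit computation in the ramified $IV$ case that singles out Schur: the system is small enough to tackle symbolically, and the symplectic $\Z/3\Z$-action $(x_1,x_2)\mapsto(\zeta x_1,\zeta^{-1}x_2)$ together with the cyclic symmetry of the three concurrent lines in $F_0$ should cut down the bookkeeping considerably.
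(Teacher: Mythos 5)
Your case skeleton is the right one (ramified $I_2$, ramified $IV$, unramified $IV$, unramified $I_3$ are indeed the only fibres of $\pi_0$ that can contain a line when $R=2^2$), and in the two cases that genuinely require computation you land where the paper does: the ramified $IV$ case is reduced to an explicit elimination that singles out Schur's quartic, and the unramified $I_3$ case is reduced to showing that a second ramification type $2^2$ on $\ell'$ overdetermines $q,g$. The paper is no more explicit than you are about either computation. One small repair in the $I_3$ case: for $R'=2^2$ you should invoke Lemma~\ref{lem:Z} rather than Proposition~\ref{prop1}(b), since $\ell'$ may meet exactly $18$ lines; and the paper is careful that the resulting degeneration of the $I_3$ fibre can a priori go to type $IV$ (which must be excluded separately) and not only to a singular point of $S$, whereas you assert singularity outright.

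The substantive divergence is in the two exclusion cases, and there your argument has gaps. The paper dispatches both without any computation by looking at the fibration $\pi'$ induced by $\ell'$ and reading off the Kodaira type of its fibre supported on the \emph{same} hyperplane: for a ramified $I_2=\ell'+Q$ the residual cubic is $\ell_0+Q$ with $\ell_0$ tangent to $Q$ (ramification type $2$), i.e.\ a type $III$ fibre of $\pi'$; for an unramified $IV=\ell'+\ell''+\ell'''$ the residual cubic $\ell_0+\ell''+\ell'''$ is a triangle whose node $\ell''\cap\ell'''$ lies on $\ell'$, i.e.\ a ramified $I_3$ fibre of $\pi'$. Both types are impossible for a line of the second kind by Lemmata~\ref{lem:unram} and \ref{lem:ram}, so $\ell'$ is of the first kind in either case, full stop. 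Your substitutes are bare assertions: that the second-kind condition on the line of a ramified $I_2$ ``forces the conic to degenerate to a double line'' is an unverified computational claim with no argument behind it, and the remark that an unramified $IV$ is ``the codimension-one collision of a generic $(I_1,I_3)$ pair'' is not an argument at all --- unramified $IV$ fibres do occur on smooth members of $\mZ$, so what must be shown is that their line components cannot be of the second kind, not that they are incompatible with smoothness. As written these two steps are unjustified; I recommend replacing them with the $\pi'$-fibre-type argument above, which also explains structurally why the ramified $IV$ case is the only one that can survive.
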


\begin{proof}
We begin by assuming that the fibre of $\pi_0$ containing $\ell'$ is ramified.
If the Kodaira type were to be $I_2$, then $\ell'$ induces a genus 1 fibration $\pi'$ with corresponding fibre of type $III$ (see below). This is impossible for lines of the second kind by 
Lemmata \ref{lem:unram}, \ref{lem:ram}.
Hence the fiber has Kodaira type $IV$,
and spelling out the conditions for $\ell'$ to be of the second kind leads directly to the Schur quartic.

Assume now that the fibre containing $\ell'$ is unramified.
If it were to have Kodaira type $IV$,
then on the genus 1 fibration $\pi'$  induced by $\ell'$ this would give a ramified $I_3$ fibre.
Thus $\ell'$ would be of the first kind by Lemma \ref{lem:ram}.
We conclude that the fibre containing $\ell'$ has Kodaira type $I_3$.
If $\ell'$ meets more than 16 lines on $S$,
then is has ramification type $R=2^2$ by Proposition \ref{prop}.
Spelling out the condition that $\pi'$ has only two ramified fibres,
one infers that the type $I_3$ fibre degenerates,
leading either to type $IV$ (which we ruled out above)
or to a singularity on $S$.
Hence $\ell'$ meets at most 16 lines on $S$.
\end{proof}

In other words, unless $S$ is projectively equivalent to the Schur quartic (with 64 lines),
any line of the second kind meeting more than 16 lines on $S$ is perpendicular
to all other lines of the second kind.
For the rest of this section, we make the following assumption
which we want to lead to a contradiction:

\begin{Assumption}
\label{ass}
$S$ contains more than 64 lines.
\end{Assumption}

We claim that the assumption implies that all lines in the fibres of $\pi_0$ are of the first kind.
Indeed,
since the Schur quartic contains exactly 64 lines
and thus is ruled out by Assumption \ref{ass},
we deduce from Lemma \ref{lem:I-II} that a line $\ell'$ 
of the second kind contained in a fibre of $\pi_0$ would meet at most 16 lines on $S$.
Now we can argue with a hyperplane $H$ decomposing on $S$ 
into $\ell_0$ and the fibre of Kodaira type $I_3$ or $IV$
comprising $\ell'$ and two other lines.
Each of these meets at most 18 lines on $S$ by Lemma \ref{lem:linesfirstkind}
and Lemma \ref{lem:I-II}.
Any other line on $S$ meets $H$ and thus exactly one of the four lines
that $H$ decomposes into. 
As required, we thus find
\[
\#\{\text{lines on } S\} \leq 4 + (16-3) + 2\cdot(18-3) + (20-3) = 64.
\]

We have just verified that all lines in the fibres of $\pi_0$ are of the first kind.
The completely analogous argument with a fibre of Kodaira type $I_3$ or $IV$
decomposing into lines of the first kind gives
\[
\#\{\text{lines on } S\} \leq 4 + 3\cdot(18-3) + (20-3) = 66
\]

\begin{Corollary}[66 lines]
\label{cor:66}
$S$ contains at most 66 lines.
\end{Corollary}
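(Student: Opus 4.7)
The plan is to apply the hyperplane-section argument already used in the preceding paragraph, but now with a fibre $F$ of $\pi_0$ of Kodaira type $I_3$ or $IV$ whose three components are all lines of the first kind. Since $\ell_0$ is of the second kind with ramification type $R=2^2$, Proposition \ref{prop} guarantees that $\pi_0$ has several such reducible fibres (six, in fact); and we have just shown under Assumption \ref{ass} that every line appearing in a fibre of $\pi_0$ is of the first kind.

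Fix such a fibre $F$ and let $H \subset \PP^3$ be the unique hyperplane containing both $\ell_0$ and $F$. Then $H \cap S$ decomposes on $S$ into the four lines $\ell_0, \ell_1, \ell_2, \ell_3$, where $\ell_1, \ell_2, \ell_3$ are the components of $F$. Every line $m \subset S$ not contained in $H$ meets $H$ in a single point, which must lie on $H \cap S$, hence on at least one of the $\ell_i$; therefore
\[
\#\{\text{lines on } S\} \;\leq\; 4 + \sum_{i=0}^{3} \#\{\text{lines in } S \text{ not in } H \text{ meeting } \ell_i\}.
\]

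I would then bound the four summands individually. For $\ell_0$, Proposition \ref{prop} in the case $R=2^2$ allows at most $20$ other lines on $S$ to meet $\ell_0$; three of these are the $\ell_i$ inside $H$, leaving at most $20-3=17$ outside $H$. For each $\ell_i$ with $i\geq 1$, Lemma \ref{lem:linesfirstkind} bounds the number of other lines meeting $\ell_i$ by $18$, and three of those already lie inside $H$, namely $\ell_0$ together with the two other components of $F$, with which $\ell_i$ is always incident in both the $I_3$ triangle and the $IV$ pencil. This leaves at most $18-3=15$ outside, and summing yields
\[
\#\{\text{lines on } S\} \;\leq\; 4 + 17 + 3\cdot 15 \;=\; 66.
\]

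The step is quite direct once the structural input is in place; the only point I would verify carefully is that the components of $F$ are pairwise incident and each meets $\ell_0$, so that the subtraction of $3$ is legitimate for every $\ell_i$. This follows from Table \ref{tab:F} together with the intersection computation $\ell_0 \cdot F = \ell_0 \cdot (H - \ell_0) = 3$ on $S$. The mild overcount implicit in summing over $i$ (when a line outside $H$ passes through a point where two $\ell_i$ meet) only weakens the inequality, so it does not affect the upper bound.
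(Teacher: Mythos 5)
Your proposal is correct and follows essentially the same route as the paper: the paper likewise takes the hyperplane through $\ell_0$ and a fibre of type $I_3$ or $IV$ whose components are (as just established under Assumption \ref{ass}) lines of the first kind, and computes $4+3\cdot(18-3)+(20-3)=66$. Your extra verification that the three fibre components are pairwise incident and each meets $\ell_0$ is a sensible check and holds in both the $I_3$ and $IV$ configurations.
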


In particular, Assumption \ref{ass} requires that 
$\pi_0$ has $3\times 15=45$ lines as sections
and $\ell_0$
(and in fact any line of the second kind contained in $S$) meets more than the 18 generic lines on $S$.
Thus, by Lemma \ref{lem:degen}, $\pi_0$ admits  a (ramified) fibre of Kodaira type $I_2$.
We denote the fibre components by 

\begin{tabular}{cl}
$\ell_1$ & a line of the first kind by Lemma \ref{lem:I-II},\\
$Q$ & a conic which, by direct computation, is no component of $\mbox{supp}(\mF_S)$.
\end{tabular}

\subsection{Second elliptic fibration}
\label{ss:2nd}

As before, the line $\ell_1$ induces an elliptic fibration
\[
\pi_1 : S \to \PP^1.
\]
Since $\ell_0$ meets $Q$ tangentially in one of the two intersection points with $\ell_1$ 
(cf.~the analysis in the proof of Lemma \ref{lem:ram}),
 $\ell_0+Q$ gives a singular fibre $F_0$ of $\pi_1$ of Kodaira type $III$
 (which independently proves that $\ell_1$ is of the first kind by Lemmata \ref{lem:unram}, \ref{lem:ram}).
It is now about time to exploit the fact that the surface $S$ admits the symplectic automorphism
of order 3 
$$
\begin{array}{cccc}
\sigma: & S & \to & S\\
& [x_1,x_2,x_3,x_4] & \mapsto & [\varrho x_1,\varrho^2 x_2,x_3,x_4]
\end{array}
$$
where $\varrho$ is a primitive third root of unity.
The quotient $S/\sigma$ has a minimal resolution which is again a K3 surface $S'$.
Since the lines $\ell_0, \ell_1$ are fixed by $\sigma$,
$S'$ is equipped with elliptic fibrations induced by $\pi_0, \pi_1$.
In terms of  $\pi_0$, the quotient map $S\dasharrow S'$ is given by a 3-isogeny on the fibres
(visible immediately as soon as there is an additional line serving as a section);
notably $\sigma$ rotates the components of the $I_3$ and $IV$ fibres.
We also deduce for any line $\ell$ serving as a section of $\pi_0$  that 
$\ell^\sigma\neq \ell$ since $\ell^\sigma$ meets different components of the $I_3$ and $IV$ fibres.
Since $\ell_1$ is fixed by $\sigma$ as a set (with two fixed points 
where $\pi_1$ has the fibre of Kodaira type $III$ comprising $\ell_0$ and $Q$ 
as well as a smooth elliptic curve $E_0$ as fibre),
we find:


\begin{Lemma}
\label{lem:triples}
The lines met by $\ell_1$ other than $\ell_0$,
and the reducible singular fibres of $\pi_1$ other than $F_0$ come in triples.
\end{Lemma}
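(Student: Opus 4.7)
The plan is to deduce both assertions from the action of the order-$3$ symplectic automorphism $\sigma$ on the fibration $\pi_1$. Concretely, I will show that $\sigma$ acts non-trivially on the base $\PP^1$ of $\pi_1$ with precisely two fixed points, one corresponding to $F_0$ and the other to an \emph{irreducible} fibre, so that all other fibres (and hence all lines meeting $\ell_1$ other than $\ell_0$) come in $\sigma$-orbits of length three.

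The first step is to set up the $\sigma$-action on $\pi_1$. In the normal form of Lemma~\ref{lem:Z}, adapted to the degeneration $x_3 \mid g$ from Lemma~\ref{lem:degen} that produces the $I_2$ fibre of $\pi_0$, one finds $\ell_1 = \{x_2 = x_3 = 0\}$. This line is visibly $\sigma$-invariant, so $\sigma$ permutes the planes $\{ax_2 + bx_3 = 0\}$ through $\ell_1$. A direct computation shows that the induced action on the base is $[a:b] \mapsto [\varrho^2 a : b]$, an order-$3$ rotation with exactly two fixed points $[0:1]$ and $[1:0]$. The point $[0:1]$ corresponds to $F_0 = \ell_0 + Q$, whose two components are preserved individually by $\sigma$ (they cannot be swapped, being of different degrees). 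The point $[1:0]$ corresponds to the residual cubic $E_0 \subset \{x_2 = 0\}$ cut out by $x_1^3 + \tilde g(x_3,x_4) = 0$, where $g = x_3 \tilde g$.

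The main obstacle, and the heart of the argument, is to show that $E_0$ is irreducible. One checks that $x_1^3 + \tilde g$ is reducible over $k[x_1,x_3,x_4]$ iff $\tilde g$ is a perfect cube $\alpha L^3$ in $k[x_3, x_4]$, in which case $E_0$ splits as three concurrent lines (Kodaira type $IV$). I expect to rule this out by a direct local computation: any multiple root of $\tilde g$ forces all partial derivatives of the defining quartic of $S$ to vanish at the corresponding point on the axis $\{x_1 = x_2 = 0\}$, contradicting the smoothness of $S$. The same computation in fact shows that $E_0$ is smooth, matching the description of $E_0$ given in the paragraph preceding the statement.

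With $F_0$ identified as the unique $\sigma$-invariant reducible fibre of $\pi_1$, both claims follow at once. Any other reducible fibre $F$ of $\pi_1$ lies in a $\sigma$-orbit $\{F, F^\sigma, F^{\sigma^2}\}$ of length three, which proves the second assertion. For the first, any line met by $\ell_1$ other than $\ell_0$ must be a component of some reducible fibre $F \neq F_0$ of $\pi_1$ (since the only line component of $F_0$ is $\ell_0$), and its $\sigma$-orbit $\{\ell, \ell^\sigma, \ell^{\sigma^2}\}$ consists of three distinct lines, one in each of $F$, $F^\sigma$, $F^{\sigma^2}$.
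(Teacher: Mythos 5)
Your proof is correct and follows essentially the same route as the paper: both arguments rest on the order-$3$ automorphism $\sigma$ acting on the pencil of planes through $\ell_1$ with exactly two fixed fibres, namely $F_0=\ell_0+Q$ and the smooth cubic $E_0$ in $\{x_2=0\}$, so that everything else falls into $\sigma$-orbits of length three. The only difference is one of detail: you explicitly verify (via the multiple-root/smoothness computation for $\tilde g$) that $E_0$ is irreducible, a fact the paper asserts without proof, and this verification is sound.
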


Since $\ell_1$ is of the first kind, it meets at most 18 lines on $S$ by Lemma \ref{lem:linesfirstkind}.
But here except for $\ell_0$, these lines come in triples by the above lemma, 
i.e.
\[
 \#\{\text{lines $\neq \ell_0$ on $S$ meeting }\ell_1\}=3N.
\]
Hence we deduce:

\begin{Corollary}
\label{cor:16}
The line $\ell_1$ meets at most 16 other lines on $S$ (i.e.~$N\leq 5$).
\end{Corollary}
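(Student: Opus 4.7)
The plan is short: chain Lemma \ref{lem:linesfirstkind} with Lemma \ref{lem:triples} and finish by a divisibility observation. The line $\ell_1$ is of the first kind---this was recorded just after Corollary \ref{cor:66} via Lemma \ref{lem:I-II}, and it is also visible independently from the type $III$ fibre $F_0$ of $\pi_1$ produced at the start of Section \ref{ss:2nd}. Consequently Lemma \ref{lem:linesfirstkind} bounds the total number of lines on $S$ that meet $\ell_1$ and are distinct from $\ell_1$ by $18$. One of these lines is $\ell_0$, since the trisection $\ell_0$ meets the $I_2$ fibre of $\pi_0$ whose line component is $\ell_1$, and (as spelled out at the start of Section \ref{ss:2nd}) the intersection point with $\ell_1$ actually lies on $\ell_1$; equivalently, $\ell_0 \cdot \ell_1 = 1$ on $S$.

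Writing $3N$ for the number of lines on $S$ that meet $\ell_1$ and are distinct from both $\ell_0$ and $\ell_1$---the factor of $3$ being supplied by Lemma \ref{lem:triples}, since each $\sigma$-orbit of such lines has length $3$---I would combine the two counts to obtain $3N + 1 \le 18$, i.e.\ $3N \le 17$. Because $3N$ is a non-negative multiple of $3$, this in fact forces $3N \le 15$, equivalently $N \le 5$. Hence $\ell_1$ meets at most $1 + 15 = 16$ other lines on $S$, as asserted.

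No serious obstacle arises. The only point worth flagging is that no $\sigma$-orbit appearing in Lemma \ref{lem:triples} degenerates to a singleton other than the one containing $\ell_0$; this is already built into the setup of Section \ref{ss:2nd}, since the only $\sigma$-invariant reducible fibre of $\pi_1$ is $F_0 = \ell_0 + Q$, whose unique line component is $\ell_0$ itself, while the second $\sigma$-fixed fibre of $\pi_1$ is the smooth elliptic curve $E_0$ and thus contributes no line component at all.
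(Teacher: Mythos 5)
Your argument is correct and coincides with the paper's own proof: $\ell_1$ is of the first kind, so Lemma \ref{lem:linesfirstkind} gives the bound of $18$, and since all lines meeting $\ell_1$ other than $\ell_0$ come in $\sigma$-triples by Lemma \ref{lem:triples}, the count $3N+1\le 18$ forces $3N\le 15$, i.e.\ at most $16$ lines. The additional remark about no orbit degenerating is a reasonable sanity check but adds nothing beyond what the paper already establishes.
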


Recall our assumption that $S$ contains more than 64 lines.
Any of these lines has to meet either $\ell_0, \ell_1$ or $Q$.
For the first and the last, the number of lines other than $\ell_0, \ell_1$ is bounded by
$19$ (Proposition \ref{prop}) and $36$ (since $Q$ is not contained in the flecnodal divisor,
but meets both $\ell_0, \ell_1$ with multiplicity 2).
Thus we find
\[
\#\{\text{lines on }S\} \leq 57 +3N\]
and therefore
\[
\#\{\text{lines on }S\}\geq 64 \Longrightarrow N\geq 3.
\]

\begin{Lemma}
\label{lem:config}
If $N\geq 3$, then the reducible singular fibres of $\pi_1$ have Kodaira types $III$ (consisting of $\ell_0, Q$) and  the following:
\begin{enumerate}
\item
\label{case1}
($N=5$)
$ 3 I_3, 6 I_2$;
\item
($N=4$)
$ 3 I_3$ or $IV$, $3 I_2$ or $III$;
\item
\label{case3}
($N=3$)
$3 I_3$ or $IV$;
\item
\label{case4}
($N=3$)
$9 I_2$ or $6 I_2, 3 III$.
\end{enumerate}
\end{Lemma}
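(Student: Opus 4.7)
The plan is to combine the symplectic $\sigma$-symmetry from Section~\ref{ss:2nd} with the Euler--Poincar\'e equation $e(S)=24$ applied to the fibration $\pi_1$, reducing the classification to a short Diophantine enumeration. Since Section~\ref{ss:2nd} identifies $F_0$ (of type $III$) and the smooth fibre $E_0$ as the only $\sigma$-fixed fibres of $\pi_1$, every remaining singular fibre must lie in a $\sigma$-orbit of size three consisting of fibres of the same Kodaira type (the type being a $\sigma$-invariant). I would write $k_T\geq 0$ for the number of such triples of Kodaira type $T$, with $T$ ranging over the six types admissible on a smooth quartic (Table~\ref{tab:F}), and $L(T)\in\{0,1,3\}$ for the number of line components of such a fibre.

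The two relations I would then set up are as follows. Counting the $3N$ lines of $S$ distinct from $\ell_0$ that meet $\ell_1$ as line components of the reducible non-$F_0$ fibres of $\pi_1$ gives
\[
k_{I_2}+3k_{I_3}+k_{III}+3k_{IV}=N,
\]
while subtracting $e(F_0)=3$ from $e(S)=24$ and tabulating local Euler numbers yields
\[
k_{I_1}+2k_{II}+2k_{I_2}+3k_{I_3}+3k_{III}+4k_{IV}=7.
\]
The rest is arithmetic. For $N=5$, testing each value of $k_{IV}$ and $k_{III}$ shows that both must vanish, and then $(k_{I_2},k_{I_3})=(2,1)$ with $k_{I_1}=k_{II}=0$ is the only feasible solution, giving case~\eqref{case1}. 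For $N=4$, a direct check shows that any solution features exactly one reducible triple of $I_3$- or $IV$-type together with exactly one of $I_2$- or $III$-type, with the residual Euler number absorbed by $I_1$ or $II$ fibres; this produces the four configurations of case~(2). For $N=3$, either a single reducible triple of $I_3$- or $IV$-type accounts for all nine lines, yielding case~\eqref{case3}, or no such triple occurs and the only feasible tuples are $(k_{I_2},k_{III})\in\{(3,0),(2,1)\}$, yielding the two alternatives of case~\eqref{case4}.

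The main delicate step is the equivariance bookkeeping that forces triples of the same Kodaira type: one must verify that $\sigma$ really has no fixed fibre of $\pi_1$ beyond $F_0$ and $E_0$, and in particular that no singular fibre of Kodaira type $IV$ or $I_3$ is $\sigma$-stable with its line components merely permuted by the order-$3$ automorphism. This is precisely what Section~\ref{ss:2nd} secures via the explicit description of $\sigma|_{\ell_1}$ and its two fixed points on the base $\PP^1$. Once this is in place, the arithmetic enumeration above finishes the proof and no further geometric input is required.
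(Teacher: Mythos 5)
Your proof is correct and follows essentially the same route as the paper: the paper's own (much terser) argument likewise combines the $\sigma$-orbit structure of Lemma~\ref{lem:triples} with the Euler--Poincar\'e count $e(S)=24$ (noting the extra type $III$ fibre $F_0$), listing the admissible configurations and excluding $15I_2$, $12I_2$, $3I_2+6III$ and beyond. Your explicit Diophantine bookkeeping with the two equations simply spells out what the paper leaves implicit.
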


\begin{proof}
The given configurations are compatible with the Euler-Poincar\'e characteristic $e(S)=24$
which is computed as the sum over the contributions from the (singular) fibres.
Any other configuration with given number of $3N$ lines 
($15 I_2$, $12 I_2$, $3 I_2 + 6 III$ and beyond)
would contradict this bound,
partly  due to the extra fibre of type $III$ given by $F_0=\ell_0+Q$.
\end{proof}

\subsection{Proof of Proposition \ref{prop:64} for cases (\ref{case1}) - (\ref{case3}) of Lemma \ref{lem:config}}
We shall use that $\pi_1$ always has a fibre of Kodaira type $I_3$ or $IV$
in these three cases.
In other words, there is a hyperplane $H$ splitting on $S$ as $\ell_1+\ell_2+\ell_3+\ell_4$.
Thus we can express the number of lines on $S$
in terms of the lines meeting either $\ell_i$ (as in the proof of Lemma \ref{lem:64}).
In order to reach more than 64 lines,
we have to compensate for $\ell_1$ only meeting 16 lines.
Since any line meets at most 20 lines on $S$ by Corollary \ref{cor:20},
this means that at least 2 out of $\ell_2, \ell_3, \ell_4$ meet 19 or 20 lines.
By Lemma \ref{lem:I}, these two lines are of the second kind.
But then, since the two lines are planar and thus intersect,
Lemma \ref{lem:I-II} implies that $S$ is the Schur quartic -- with 64 lines, contradiction. 
\qed

\subsection{Proof of Proposition \ref{prop:64} for case (\ref{case4}) of Lemma \ref{lem:config}}

Consider the 9 fibres $F_i $ of type $I_2$ (or possibly 3 among them of type $III$),
writing them as line plus conic
\[
F_i = \ell_i+Q_i \;\;\; (i = 2,\hdots,10).
\]
If all conics $Q_i$ were to be contained in the flecnodal divisor,
then the remaining part of degree 62 could account for at most 62 lines.
Hence we can assume that $Q_2$ (as well as $Q_3=Q_2^\sigma, Q_4=Q_2^{\sigma^2}$)
is not contained in the flecnodal divisor.
Thus $Q_2$ meets at most 36 lines on $S$ other than $\ell_1, \ell_2$.
This gives the following upper bound for the number of lines on $S$:
\[
\#\{\text{lines on }S\} \leq 10+36+\#\{\text{lines on $S$ meeting $\ell_2$}\}.
\]
Since $S$ was assumed to contain more than 64 lines, we infer that $\ell_2$ meets more than 18 lines.
Thus,  $\ell_2$ is of the second kind by Lemma \ref{lem:I}.
We also infer that the fibres $F_2, F_3, F_4$ of $\pi_1$ have Kodaira type $III$
 since $\ell_2$ has to meet $Q_2$ tangentially
 (cf.~the ramification analysis in the proof of Lemma \ref{lem:ram}).
 
 \begin{Lemma}
 The conics $Q_5,\hdots,Q_{10}$ are contained in the flecnodal divisor $\mF_S$.
 \end{Lemma}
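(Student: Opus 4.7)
The plan is to argue by contradiction: assume $Q_j\not\subset\mF_S$ for some $j\in\{5,\ldots,10\}$ and derive a violation of $e(S)=24$. First I would note that the flecnodal divisor $\mF_S$ is intrinsic to $S$ and therefore $\sigma$-invariant, so the entire $\sigma$-orbit of $Q_j$ lies outside $\mF_S$; after relabelling we may assume this orbit is $\{Q_5,Q_6,Q_7\}$.

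Next I would repeat, with $(\ell_5,Q_5)$ replacing $(\ell_2,Q_2)$, the line-counting estimate just used for $Q_2$: the inequality
\[
\#\{\text{lines on }S\}\leq 10+36+\#\{\text{lines on }S\text{ meeting }\ell_5\}
\]
together with Assumption~\ref{ass} forces $\ell_5$ to meet more than $18$ lines, so by Lemma~\ref{lem:linesfirstkind} the line $\ell_5$ is of the second kind.

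The central step will be to upgrade this to ``$F_5$ is of Kodaira type $III$ on $\pi_1$''. For this I would consider the elliptic fibration $\pi_5:S\to\PP^1$ induced by $\ell_5$ and study its fibre supported on $\ell_1+Q_5$, cut out by the plane spanned by $\ell_1$ and $\ell_5$. Lemmata~\ref{lem:unram} and~\ref{lem:ram} applied to $\pi_5$ (with $\ell_5$ of the second kind) exclude Kodaira type $III$ altogether, so $\ell_1$ and $Q_5$ meet transversally and this fibre has type $I_2$; Lemma~\ref{lem:ram} then forces ramification type $2$, i.e.~$\ell_5$ meets $\ell_1+Q_5$ at a single point with multiplicity $3$. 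Since $\ell_5\cap\ell_1$ is already a transverse point $p$, $p$ must be this point, and therefore $\ell_5\cdot Q_5=2$ concentrates at $p$ as well; thus $\ell_5$ is tangent to $Q_5$ at $p$, and $F_5=\ell_5+Q_5$ has Kodaira type $III$ as a fibre of $\pi_1$.

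Finally, $\sigma$-symmetry propagates the conclusion to $F_6$ and $F_7$, so $F_5,F_6,F_7$ are all of type $III$. Combined with the already established type-$III$ fibres $F_0,F_2,F_3,F_4$ this yields seven $III$-fibres contributing $21$ to $e(S)=24$; together with the at least $3\cdot 2=6$ contributed by the remaining three reducible fibres $F_8,F_9,F_{10}$, the total exceeds $24$, the desired contradiction. The step I would expect to be most delicate is the passage from ``$\pi_5$-fibre ramified of type $2$'' to ``$F_5$ is of type $III$'': this requires tracking precisely the multiplicities of $\ell_5\cap(\ell_1+Q_5)$ and ruling out any exotic configuration, which I would do by following closely the ramification analysis in the proof of Lemma~\ref{lem:ram}.
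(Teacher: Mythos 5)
Your proposal is correct and follows essentially the same route as the paper: assume some $Q_j$ lies off $\mF_S$, rerun the $10+36+\#\{\text{lines meeting }\ell_j\}$ count to force $\ell_j$ to be of the second kind, use the ramification analysis of Lemma~\ref{lem:ram} on the fibration induced by $\ell_j$ to conclude that $F_j$ has Kodaira type $III$, and contradict $e(S)=24$. The only (harmless) difference is that you propagate via $\sigma$ to get seven type-$III$ fibres, whereas the paper stops at five ($F_0,F_2,F_3,F_4,F_5$), which already pushes the fibre contributions past $24$.
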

 
 \begin{proof}
 Assume to the contrary that $Q_5$ is not contained in $\mF_S$ either.
 Then arguing as above, we find that $\ell_5$ is a line of the second kind, and $F_5$ has Kodaira type $III$. 
 Thus among our 10 reducible fibres of $\pi_1$ of Kodaira types $I_2$ and $III$, we have at least 5 $III$.
 This   causes $e(S)$ to exceed $24$, giving the required contradiction.
 \end{proof}

We continue by analysing the flecnodal divisor which we decompose as
\[
\mF_S = 65 \mbox{ lines} + Q_5 + \hdots + Q_{10} + \mR, \;\;\; \deg(\mR)=3.
\]
where the 65 lines comprise
$\ell_0, \ell_1$, the 18 components of $I_3, IV$ fibres of $\pi_0$, and 
the 45 lines including $\ell_2,\hdots,\ell_{10}$ which are sections of $\pi_0$.
By construction, both the sum over the 65 lines and $\mR$ are $\sigma$-invariant,
 and $\mR$ can a priori contain duplicates of some lines as well as the components
of a potential second singular fibre of $\pi_0$ of Kodaira type $I_2$.
Since $\ell_0$ is perpendicular to all the $Q_i(i>1)$ as components of different fibres of $\pi_1$,
we find using $\ell_0.\mF_S=20$
\begin{eqnarray*}
\label{eq:3}
3=\ell_0.\mR=
\deg(\mR).
\end{eqnarray*}
Immediately this implies that $ \mR$ consists of components of fibres of $\pi_0$.
If $ \mR$ were to be a full fibre of $\pi_0$, then $\ell_2.\mR=1$, and since $\ell_2.Q_i=0 \; (i>2)$,
we obtain the following contradiction by Corollary \ref{cor:20}:
\[
20 = \ell_2.\mF_S \leq -2+20+0+1=19.
\]
Next, if $ \mR$ were to split off an irreducible conic, 
then this would be a component of an $I_2$ fibre of $\pi_0$
which we computed to lie off $\mF_S$.
Finally assume that $ \mR$ decomposes into 3 lines.
Since $ \mR$ is $\sigma$-invariant, 
but cannot be a full fibre of $\pi_0$,
these lines can only be $\ell_1$ and possibly the linear component $\ell_1'$ of a second fibre of type $I_2$:
\[
\mR = a \ell_1 + (3-a) \ell_1',\;\;\; a\in\{0,1,2,3\}.
\]
Since $\ell_1.\mF_S=20$ implies $\ell_1.\mR=0$ and $\ell_1, \ell_1'$ are orthogonal as fibre components of $\pi_0$,
we infer that $a=0$ and $\pi_0$ has in fact a second $I_2$ fibre:
\[
\mR=3\ell_1'.
\]
But then we can argue for the fibration $\pi_1'$ induced by $\ell_1'$ as for $\pi_1$.
In consequence, exactly the same reasoning starting from \ref{ss:2nd} goes through to show
that Assumption \ref{ass} implies that $\ell_1'$ meets at most 10 lines on $S$
(case (\ref{case4}) of Lemma \ref{lem:config} applied to $\pi_1'$).
Hence we obtain the estimate
\[
20 = \ell_1'.\mF_S \leq 10 + 12 - 6 = 16.
\]
This gives the required contradiction to Assumption \ref{ass},
thus completing the proof of Proposition \ref{prop:64}.
\qed

\subsection{Proof of the main result}
Theorem \ref{thm}  follows from the combination of Proposition \ref{prop:64} and Lemma \ref{lem:64} (a).
\qed

\begin{Example}
\label{ex}
The subfamily of $\mZ$ of quartics such that $\pi_0$ has a fibre of Kodaira type $I_2$
does in fact admit surfaces with as many lines as 60.
For instance, consider the quartic defined by
\[
S=\{x_1^3 x_3+x_1 x_2 x_3^2+x_2^3 x_4+r x_3^3 x_4-x_1 x_2 x_4^2-r x_3 x_4^3=0\}\subset\PP^3
\] 
with $r=-16/27$.
Then this is smooth outside characteristics $2,3,5$ and contains 60 lines,
20 of which meet the line of the second kind given by $\{x_3=x_4=0\}$.
Over $\C$, we obtain a singular K3 surface with Picard number $\rho=20$ and discriminant $-15$ or $-60$.
\end{Example}

\begin{Remark}
Most arguments in this section remain valid in characteristic $p=2$ as well.
This is due to their geometric nature,
since there are no degenerations.
To see this, note first that the flecnodal divisor is non-degenerate since it does not contain the conic $Q$.
Furthermore, the fibrations $\pi_0, \pi_1$ cannot be quasi-elliptic 
since $\pi_0$ contains fibres of Kodaira type $I_2, I_3, IV$ not occurring on quasi-elliptic fibrations in characteristic $2$, and $\pi_1$ has the smooth fibre $E_0$ at one of the fixed points of $\sigma$ on $\ell_1$.
\end{Remark}

\section{Quartics with 64 lines}
\label{s:64}

To conclude this note,
we attempt to collect all results necessary to describe the line configuration on a smooth quartic with 64 lines.
Unfortunately, we need an assumption as pointed out by Tomasz Szemberg and Davide Veniani:

\begin{table}[ht!]
\begin{tabular}{cl}
\multirow{2}{*}{(*)} & 
\text{for any line $\ell\subset S$, if $\ell$ intersects two (coplanar) lines $\ell_1,\ell_2\subset S$ in their}\\
& \text{intersection point $P$,
then also the residual line $\ell_3$ goes through $P$.}
\end{tabular}
\end{table}

\begin{Proposition}
\label{prop:=64}
Let $S$ be a smooth quartic surface containing 64 lines.
Assume (*).
Then each line meets exactly 18 other lines on $S$;
these are arranged in a total number of 6 triangles (Kodaira type $I_3$) and stars
(Kodaira type $IV$).
\end{Proposition}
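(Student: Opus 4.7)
The plan is to split according to whether $S$ is projectively equivalent to a quartic in the family $\mZ$ from Lemma~\ref{lem:Z}.

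In Case~A (where $S\not\in\mZ$ up to projective equivalence), Lemma~\ref{lem:linesfirstkind} and Proposition~\ref{prop} give $N(\ell) \leq 18$ for every line on $S$, since lines of the second kind outside $\mZ$ avoid ramification type $R = 2^2$. Lemma~\ref{lem:64}(b) yields four coplanar lines $\ell_1, \ldots, \ell_4$, each of the first kind and each meeting exactly 18 others; the counting argument there applies to any quadruple of coplanar lines on $S$ (from $64 \leq 4 + \sum_{i=1}^{4}(N(\ell_i') - 3)$ and $N(\ell_i') \leq 18$ one infers $N(\ell_i')=18$), so every line of $S$ lying in a 4-line plane meets exactly 18 others. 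The key step in Case~A is then to show that every line on $S$ is in some 4-line plane. If $\ell$ lay in none, then $\pi_\ell$ would have only reducible line-containing fibres of type $I_2$ or $III$, forcing the number of lines met by $\ell$ to be at most $12$ via $e(S) = 24$. I plan to rule this out by a Shioda--Tate/Mordell--Weil analysis of $\pi_{\ell_1}$: with $A_1 = n_{I_3} + n_{IV}$ and $B_1 = n_{I_2} + n_{III}$ satisfying $3A_1 + B_1 = 18$, the Picard bound $\rho(S) \leq 22$ gives $\mathrm{rk}\,\MW(\pi_{\ell_1}) \leq 2 + A_1$, while the $45$ lines on $S$ skew to $\ell_1$ serve as line-sections of $\pi_{\ell_1}$ and should force $A_1 = 6$, $B_1 = 0$.

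Once every line on $S$ is known to meet exactly 18 others, the fibre structure follows: the relation $3A + B = 18$ combined with the Euler--Poincar\'e characteristic formula for $\pi_\ell$ yields $A \geq 4$, and the same Shioda--Tate/Mordell--Weil analysis applied to an arbitrary line $\ell$ pins down $A = 6$ and $B = 0$, so $\pi_\ell$ has exactly six reducible fibres each of type $I_3$ or $IV$. For Case~B ($S \in \mZ$), the structural analysis of Section~\ref{s:II}---in particular the behaviour of $\pi_0$ and $\pi_1$ under the symplectic automorphism $\sigma$ of order $3$ together with the consequences of Corollary~\ref{cor:66}---carries over from the strict inequality of Assumption~\ref{ass} to the equality case, yielding the same fibre configuration.

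The main obstacle is the Mordell--Weil step: pinning down $A = 6$ exactly (rather than merely $A \geq 4$) requires balancing the rank bound $\mathrm{rk}\,\MW \leq 2 + A$ against the presence of $45$ line-sections via the height pairing on the Mordell--Weil lattice and the classification of possible torsion groups for K3 elliptic fibrations. A complementary flecnodal approach---decomposing $\mF_S = L + R$ with $L = \sum_{i=1}^{64} \ell_i$ and $R$ effective of degree $16$, and computing $L^2 = \sum_\ell N(\ell) - 128$, $L\cdot R = 1408 - \sum_\ell N(\ell)$, $R^2 = \sum_\ell N(\ell) - 1088$, to which the Hodge index theorem on $\NS(S)$ applies---gives the matching upper bound $\sum_\ell N(\ell) \leq 1152$ but must be combined with the Mordell--Weil constraint to close the argument.
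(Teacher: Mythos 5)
Your overall skeleton (reduce everything to showing that each line meets exactly 18 others, anchor the argument on four coplanar lines each meeting 18, then read off the fibre structure of the induced fibrations) matches the paper's, but the two steps that actually carry the proof are left open. The first is your ``key step'' that every line lies in a 4-line plane, equivalently that every reducible fibre of $\pi_{\ell_i}$ has type $I_3$ or $IV$ ($A_i=6$, $B_i=0$). Your Shioda--Tate bound $\mathrm{rk}\,\MW\leq 2+A_1$ together with ``45 line-sections'' does not by itself force $A_1=6$: a rank-$\leq 6$ Mordell--Weil group can easily contain 45 sections represented by lines, so some height-pairing input is unavoidable, and you explicitly defer it as ``the main obstacle.'' Your fallback via the flecnodal divisor also fails to close it: the Hodge index theorem applied to $H$ and $R$ gives the \emph{upper} bound $\sum_\ell N(\ell)\leq 1152$, whereas to conclude $N(\ell)=18$ for all $\ell$ from $N(\ell)\leq 18$ you would need the \emph{lower} bound $\sum_\ell N(\ell)\geq 1152$. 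The paper closes this step differently, via Lemma~\ref{lem:aux}: for a line of the first kind meeting at least 18 others, the Mordell--Weil height formula on the base change $S_2$ forces the two sections $P_1,P_2$ into which the line splits to be 3-torsion, and accommodating non-trivial 3-torsion forces exactly six fibres of type $I_3$ or $IV$. Once that is known for the four coplanar lines of Lemma~\ref{lem:64}(b), the 45 skew lines are sections, and the bound $N\leq 18$ from Lemma~\ref{lem:<=18} gives at most 15 sections through each fibre component, so $45=3\times 15$ forces exactly 15 through each and hence exactly $15+2+1=18$ lines meeting any fibre component.

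The second gap is Case~B. The analysis of Section~\ref{s:II} is driven throughout by the strict Assumption~\ref{ass} (more than 64 lines) --- for instance the inference that $\ell_2$ meets more than 18 lines --- and it does not formally ``carry over to the equality case.'' The paper bridges this with the congruence argument in Lemma~\ref{lem:<=18}: since $\sigma$ moves every section of $\pi_0$, the sections come in triples, so $\#\{\text{lines}\}\equiv 1+\#\{\text{lines meeting }\ell_0\}\equiv 1+\#\{I_2\text{ fibres of }\pi_0\}\pmod 3$; as $64\equiv 1\pmod 3$, the presence of an $I_2$ fibre makes $\#\{\text{lines}\}\geq 64$ equivalent to $\#\{\text{lines}\}>64$, so the Section~\ref{s:II} arguments do apply and rule out an $I_2$ fibre when there are 64 lines. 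That congruence is the missing mechanism in your proposal; without it you have no bound $N(\ell_0)\leq 18$ for a line of the second kind of ramification type $2^2$ on a surface in $\mZ$, which is precisely the case your Case~A estimates do not cover.
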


To sketch a proof of Proposition \ref{prop:=64},
we need a few preparations. 
First we note the following upper bound for incident lines:

\begin{Lemma}
\label{lem:<=18}
If the smooth quartic $S$ contains 64 lines,
then any line on $S$ meets at most $18$ other lines on $S$.
\end{Lemma}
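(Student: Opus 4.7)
The plan is to argue by contradiction: assume $S$ contains $64$ lines and some line $\ell$ on $S$ meets at least $19$ others. By Lemma \ref{lem:linesfirstkind}, such an $\ell$ is of the second kind, and Proposition \ref{prop1}(b) combined with Lemma \ref{lem:degen} shows that $S$ is projectively equivalent to a quartic in $\mZ$ whose fibration $\pi_0 := \pi_\ell$ acquires an $I_2$ fibre $F_1 = \ell_1 + Q$. Setting $\ell_0 := \ell$, I would invoke the apparatus of Section \ref{s:II}: the dual fibration $\pi_1 := \pi_{\ell_1}$ carries $F_0 = \ell_0 + Q$ of Kodaira type $III$, Corollary \ref{cor:16} gives $n_{\ell_1} \leq 16$, and the reducible fibres of $\pi_1$ fall into the four cases of Lemma \ref{lem:config}.

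The core of the argument mirrors the proof of Proposition \ref{prop:64}, tightened to exploit the equality $\#\text{lines}(S) = 64$ in place of a strict inequality. In case (3), the hyperplane $H = \ell_1 + \ell_2 + \ell_3 + \ell_4$ arising from an $I_3$ or $IV$ fibre of $\pi_1$ yields $\#\text{lines}(S) \leq \sum_{i=1}^4 n_{\ell_i} - 8 \leq 10 + 60 - 8 = 62$, a direct contradiction. In cases (1) and (2), the same hyperplane bound combined with $\#\text{lines}(S) = 64$ forces at least one (in case (2), at least two) of $\ell_2, \ell_3, \ell_4$ to meet $\geq 19$ other lines, hence to be of the second kind by Lemma \ref{lem:I}; I then exploit their coplanarity in $H$ by applying Lemma \ref{lem:I-II} to the fibration induced by one of them---inside which another second-kind partner sits in the residual $I_3$ fibre---to either obtain the contradiction $n \leq 16$ or to force $S$ to be projectively equivalent to Schur's quartic. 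In case (4), where $\pi_1$ has no $I_3$ or $IV$ fibre, I would reproduce verbatim the flecnodal-divisor analysis from case (4) of the proof of Proposition \ref{prop:64}; its concluding argument via the residual divisor $\mR$ and a second $I_2$ fibre of $\pi_0$ delivers a strict contradiction equally valid when $\#\text{lines}(S) = 64$.

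Two gaps remain to close. First, the Schur branch of cases (1)--(2) must be ruled out: using Proposition \ref{prop1}(b)--(c), I verify that Schur's standard form $x_3 x_1^3 + x_4 x_2^3 + x_3^4 - x_4^4$ has $g_{\text{quartic}} = x_3^4 - x_4^4$ divisible by neither $x_3$ nor $x_4$, so its distinguished line $\{x_3=x_4=0\}$ meets exactly $18$ others; the transitivity of $\operatorname{Aut}(\text{Schur})$ on the $64$ lines then extends this to every line of Schur and supplies the required contradiction with our standing hypothesis on $\ell_0$. Second, the hardest step will be the tight sub-configuration of case (1) with $(n_{\ell_2}, n_{\ell_3}, n_{\ell_4}) = (20, 18, 18)$ and $\ell_3, \ell_4$ of the first kind, where the direct Lemma \ref{lem:I-II} argument fails for want of a coplanar second-kind partner to $\ell_2$. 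Here I would exploit the $\sigma$-triple $\{\ell_2, \ell_2^\sigma, \ell_2^{\sigma^2}\}$ supplied by Lemma \ref{lem:triples} together with a finer flecnodal-divisor count on the three $I_3$ fibres of $\pi_1$ to either manufacture the needed second-kind coplanar pair, or to rule out the configuration outright at exactly $64$ lines.
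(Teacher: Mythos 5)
Your reduction to the case of a second-kind line $\ell_0$ whose fibration $\pi_0$ acquires an $I_2$ fibre is correct, but the plan of re-running the Section~\ref{s:II} case analysis at exactly $64$ lines does not close, and you have identified the fatal spot yourself. In case (\ref{case1}) of Lemma~\ref{lem:config} the hyperplane count at exactly $64$ lines only forces $\sum_{i=2}^4 n_{\ell_i}\geq 56$, which is met by $(20,18,18)$ with a single second-kind line among $\ell_2,\ell_3,\ell_4$; Lemma~\ref{lem:I-II} then has no coplanar second-kind partner to act on, and your proposed fix (``manufacture the needed pair or rule out the configuration'') is a hope, not an argument. Moreover, your claim that the case-(\ref{case4}) flecnodal analysis transfers ``verbatim'' is false: at exactly $64$ lines the inequality $64\leq 10+36+n_{\ell_2}$ only gives $n_{\ell_2}\geq 18$ rather than $>18$, so $\ell_2$ need not be of the second kind, and the subsequent chain ($F_2,F_3,F_4$ of type $III$, the conics $Q_5,\dots,Q_{10}$ lying in $\mF_S$, the decomposition of $\mF_S$ into $65$ lines plus six conics plus a residual $\mR$ with $\deg\mR=3$) all depends on the strict Assumption~\ref{ass}; with only $64$ lines the degree bookkeeping comes out differently and the contradiction evaporates.

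The paper avoids all of this with a short congruence argument that you should adopt. Since the order-$3$ symplectic automorphism $\sigma$ rotates the components of every $I_3$ and $IV$ fibre of $\pi_0$, no line serving as a section of $\pi_0$ is $\sigma$-invariant, so the sections come in triples and
\[
\#\{\text{lines on }S\}\equiv 1+\#\{\text{lines meeting }\ell_0\}\equiv 1+\#\{I_2\text{ fibres of }\pi_0\}\pmod 3 .
\]
As $64\equiv 1\pmod 3$, the presence of an $I_2$ fibre forces the total number of lines to differ from $64$, i.e.\ having at least $64$ lines is then equivalent to having more than $64$; Proposition~\ref{prop:64} rules the latter out. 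Hence a quartic with exactly $64$ lines admits no $I_2$ fibre on $\pi_0$, and $\ell_0$ meets exactly $18$ lines by Lemma~\ref{lem:degen}. This also makes your Schur-quartic verification unnecessary; note in any case that it leaned on transitivity of the automorphism group on the $64$ lines, a fact not established in this paper.
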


\begin{proof}
For lines of the first kind, this is exactly Lemma \ref{lem:I}.
For lines of the second kind, only surfaces $S\in \mZ$ could possibly violate our claim;
this happens exactly when the fibration $\pi_0$ induced by a line $\ell_0$ of the second kind
has an additional singular fibre of type $I_2$,
consisting of a line $\ell_1$ and a residual conic $Q$
in the notation of the previous section.
Recall the automorphism $\sigma$ of order $3$ acting on $S$.
Since $\sigma$ permutes the components of the $I_3/IV$ fibres of $\pi_0$,
no section of $\pi_0$ can be fixed by $\sigma$.
Thus the sections of $\pi_0$ come in triples (resembling Lemma \ref{lem:triples}),
and in particular 
\[
\#\{\text{lines on $S$ not meeting }\ell_0\}\equiv 0\mod 3.
\]
For the total number of lines on $S$, we therefore find the congruence
\[
 \#\{\text{lines on $S$}\}\equiv 1 +  \#\{\text{lines on $S$ meeting }\ell_0\}  \mod 3.
\]
Note that modulo $3$ the number of lines meeting $\ell_0$ 
exactly equals the number of fibres of Kodaira type $I_2$ of $\pi_0$.
In particular, we find that the existence of a fibre of Kodaira type $I_2$ of $\pi_0$ implies the equivalence
\begin{eqnarray}
\label{eq:cong}
 \#\{\text{lines on $S$}\}> 64 \Longleftrightarrow  \#\{\text{lines on $S$}\} \geq 64.
 \end{eqnarray}
 Hence the arguments in Section \ref{s:II} show that 
 a surface $S\in\mZ$ may admit 64 lines only if $\pi_0$ does not have any fibre of Kodaira type $I_2$.
 That is, any line of the second kind contained in $S$ meets exactly 18 other lines on $S$.
\end{proof}

We shall now sketch 
how to prove the claim of Proposition \ref{prop:=64} concerning the arrangement of lines meeting a given one:

\begin{Lemma}
\label{lem:aux}
If a line
satisfying (*) on a smooth quartic   is met by at least 18 other lines,
then 18 of them are arranged in triangles and stars.
\end{Lemma}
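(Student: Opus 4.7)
The plan is to dichotomise according to whether the line $\ell$ is of the second or of the first kind, leveraging the machinery developed earlier in the paper.

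If $\ell$ is of the second kind and meets at least $18$ other lines, then by Proposition \ref{prop} its ramification type must be $R = 2^2$, with the number of lines meeting $\ell$ equal to $18$, $19$ or $20$. The proof of Proposition \ref{prop} in this case identifies \emph{exactly} six fibres of the induced elliptic fibration $\pi\colon S\to\PP^1$ of Kodaira type $I_3$ or $IV$, together with at most two further $I_2$ fibres accounting for the $0$, $1$ or $2$ extra lines. The six $I_3$/$IV$ fibres contribute exactly the $18$ coplanar lines arranged in triangles and stars, proving the lemma in this case.

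If $\ell$ is of the first kind, Lemma \ref{lem:linesfirstkind} forces $\ell$ to meet \emph{exactly} $18$ other lines, and Segre's polynomial $r(\lambda)$ from that proof has degree at most $18$. Every line meeting $\ell$ contributes at least one zero to $r(\lambda)$, and the proof of Lemma \ref{lem:linesfirstkind} shows that each fibre of Kodaira type $I_3$ or $IV$ contributes a \emph{triple} zero. I would extend the local computation at the end of that proof to establish that each fibre of Kodaira type $I_2$ or $III$ contributes a zero of multiplicity at least $2$: the degenerate residual cubic $\Gamma_{\lambda_0}$ is a line plus a smooth conic, and the Hessian determinant vanishes identically on the line component, forcing the root of $g_\lambda$ on the line to persist as a shared root of $g_\lambda$ and $h_\lambda$ to first order in $\lambda-\lambda_0$. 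Granting this multiplicity bound, suppose the $18$ lines decompose into $3(a+b)$ lines from $a$ fibres of type $I_3$ and $b$ of type $IV$, plus $c + d$ lines from $c$ fibres of type $I_2$ and $d$ of type $III$, so $3(a+b) + c + d = 18$. The total multiplicity of zeros of $r(\lambda)$ is at least $3(a+b) + 2(c+d) = 18 + (c+d)$, while $\deg r \leq 18$, forcing $c = d = 0$. Hence all $18$ lines live in six $I_3$ or $IV$ fibres, arranged in triangles and stars.

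The main obstacle is the local multiplicity bound at an $I_2$ or $III$ degeneration. Concretely, one parametrises an arc $\lambda \mapsto \Gamma_\lambda$ through $\lambda_0$, expands $g_\lambda$ and $h_\lambda$ to first order in $\lambda - \lambda_0$, and shows that both polynomials vanish to first order at the unique shared root inherited from the line component at $\lambda_0$, so that the resultant $r(\lambda)$ picks up (at least) a double zero at $\lambda_0$. An alternative, more geometric justification specialises an $I_3$ fibre through an intermediate $I_2$ by collapsing two vertices of the triangle, and tracks how the triple zero of $r$ splits as $2 + 1$, the simple part being absorbed by a nearby $I_1$ or $II$ fibre. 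Either approach completes the argument and, together with the second-kind case, yields Lemma~\ref{lem:aux}.
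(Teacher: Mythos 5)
Your reduction of the second-kind case to Proposition \ref{prop} (equivalently Lemma \ref{lem:fibres}) is exactly what the paper does and is fine. The first-kind case, however, has a genuine gap at precisely the step you flag as ``the main obstacle'': the claim that a fibre of Kodaira type $I_2$ or $III$ contributes a zero of multiplicity at least $2$ to $r(\lambda)$. This is not merely unproven --- it appears to be false in general. For a residual cubic $\Gamma_{\lambda_0}=\ell_0\cup Q$ the Hessian determinant factors as $\ell_0$ times a residual conic $C'$ (the Hessian matrix has rank $\leq 2$ along $\ell_0$), so $h_{\lambda_0}$ and $g_{\lambda_0}$ share only the single root at $\ell\cap\ell_0$; generically that point does not lie on $C'$, the map $\ell\to\PP^1$ is unramified there, and a first-order expansion of $\operatorname{det}\operatorname{Hess}(\Gamma_\lambda)$ along the branch of $\{g=0\}$ through this point shows that the resultant vanishes to order exactly $1$. (Contrast a triangle or star, where the Hessian cubic is proportional to $\Gamma_{\lambda_0}$ itself, so all three roots of $g_{\lambda_0}$ are shared and one does get a triple root.) With only the true bound $3(a+b)+(c+d)\le\deg r\le 18$ you recover Lemma \ref{lem:linesfirstkind} but cannot exclude, say, five triangles plus three $I_2$ fibres ($18$ lines, resultant degree $\geq 18$, no contradiction). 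Your geometric fallback does not repair this: an $I_2$ fibre need not arise as a degeneration of an $I_3$ within the pencil, and even when the quartic is deformed so that a triangle degenerates there is no reason the triple root splits as $2+1$ with the $2$ remaining at the $I_2$.

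The paper closes the first-kind case by an entirely different mechanism: it passes to the base change $S_2\to B$ on which $\ell$ splits into sections $O,P_1,P_2$ and invokes the Mordell--Weil height pairing. Non-negativity of the heights, together with the correction terms dictated by which fibre components the sections meet, forces $P_1,P_2$ to be torsion of order $3$ as soon as $\ell$ meets $18$ lines, and such torsion can only be accommodated by six fibres of type $I_3$ or $IV$. If you want to keep a purely Hessian-based argument, you would need a strictly finer local statement than the degree count of the resultant can supply.
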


\begin{proof}[Idea of proof]
For lines of the second kind, this is exactly Lemma \ref{lem:fibres} or \ref{lem:Z}.
For a line $\ell$ of the first kind, 
one can carry out a similar analysis of the ramification of the $3:1$ morphism $\ell\to\PP^1$
as in Sections~\ref{s:3}, \ref{s:4}.
In the same notation, the main difference is
that the three sections $O, P_1, P_2$ into which $\ell$ splits on the base change $S_2$
will usually not be 3-torsion.

However, if we assume that $\ell$ meets 18 other lines on $S$ or more,
then we can calculate the possible heights of the sections $P_1, P_2$
using the theory of Mordell-Weil lattices \cite{ShMW}.
Here the assumption (*) enters crucially as it ensures that the correction terms from the singular fibres
distribute evenly over the sections in a way compatible
with the intersection of the line $\ell$ on $S$ with the reducible residual cubics.
A detailed analysis 
reveals that for all heights to be non-negative both sections on $S_2$ indeed have to be torsion to start with;
but then for the fibres to accommodate two non-trivial torsion sections,
the fibre types have to include 6 times $I_3$ and $IV$ and the torsion order has to be $3$.
In particular, this confirms that $\ell$ meets 18 lines in 6 triangles or stars.
\end{proof}

\begin{Remark}
The above argument gives an alternative proof of Corollary \ref{cor:20}.
One could try to extend it for a geometric proof of Lemma \ref{lem:I}.
This would require ruling out that $\ell$ meets  additional lines outside the 6 triangles and stars
(contrary to what happens for lines of the second kind in Section~\ref{s:4}).
For instance, this can be achieved by a calculation with the Hessian as in the proof of Lemma \ref{lem:I}.
Since we are not aware
of a simple enough argument avoiding the use of the Hessian,
we omit the details here.
\end{Remark}

\begin{proof}[Proof of Proposition \ref{prop:=64}]
By Lemma \ref{lem:aux}, it remains to show
that any line meets exactly 18 others on $S$.
To see this, we use that $S$ contains 4 coplanar lines,
each meeting exactly 18 other lines on $S$.
Outside the family $\mZ$, this is exactly Lemma \ref{lem:64} (b);
within $\mZ$, we use the fibration induced by a line of the second kind and Lemma \ref{lem:<=18}.
By Lemma \ref{lem:aux}, each of these 4 coplanar lines induces an elliptic fibration
with 6 fibres of Kodaira type $I_3$ or $IV$.
The remaining $45$ lines form sections of the fibration,
15 of them meeting any given fibre component (by Lemma \ref{lem:<=18} again).

Now let $\ell$ be any line contained in $S$. Then $\ell$ meets one of the 4 coplanar lines.
Hence it is a fibre component for the induced fibration and therefore meets the following lines on $S$:
15 sections, the two other fibre components and the line inducing the fibration.
This amounts to the total number of 18 lines on $S$ met by $\ell$ as required.
\end{proof}

\begin{Remark}
It should be pointed out that any line on Schur's quartic \eqref{eq:Schur} is met by exactly 18 other lines
although the surface does not satisfy (*).
\end{Remark}

%
%
%
%
%
%
%
%

%
%


%

\subsection*{Acknowledgements}

We are indebted to Wolf Barth for sharing his insights on the subject
starting more than 10 years ago.
Thanks to Achill Sch\"urmann for helpful discussions on quadratic forms,
and to Tomasz Szemberg and Davide Veniani for pointing out an error in an earlier version
related to Proposition \ref{prop:=64}.
We are grateful to Igor Dolgachev, Duco van Straten and the anonymous referee 
for their valuable comments.
This project was started in March 2011 when MS enjoyed the hospitality of the Jagiellonian University in Krakow.
Special thanks to S\l awomir Cynk.

\end{document}